\numberwithin{equation}{section}
\theoremstyle{plain}
\newtheorem{proposition}{Proposition}[section]
\newtheorem{theorem}{Theorem}[section]
\newtheorem{lemma}{Lemma}[section]
\newtheorem{corollary}{Corollary}[section]
\theoremstyle{definition}
\newtheorem{definition}{Definition}[section]
\theoremstyle{remark}
\let\expandafter\oldproof\csname\string\proof\endcsname
\let\oldendproof\endproof
\renewenvironment{proof}[1][\proofname]{%
  \oldproof[\noindent\textbf{#1.} ]%
}{\oldendproof}
\newcommand{\1}{\mathds{1}}
\newcommand{\E}{\mathbb{E}}
\newcommand{\be}{\begin{equation}}
\newcommand{\ee}{\end{equation}}
\newcommand{\by}{\begin{eqnarray*}}
\newcommand{\ey}{\end{eqnarray*}}
\renewcommand{\leq}{\leqslant}
\renewcommand{\geq}{\geqslant}
\definecolor{dark-red}{rgb}{0.4,0.15,0.15}
\definecolor{dark-blue}{rgb}{0.15,0.15,0.4}
\definecolor{medium-blue}{rgb}{0,0,0.5}
\begin{document}
	\title{Hitting time, access time and optimal transport on graphs}
	\author{Michael C.H. Choi}\thanks{}
	\address{Institute for Data and Decision Analytics, The Chinese University of Hong Kong, Shenzhen, Guangdong, 518172, P.R. China}
	\email{michaelchoi@cuhk.edu.cn}
	\date{\today}
	\maketitle
	
	\begin{abstract}
		Given a discrete source distribution $\mu$ and discrete target distribution $\nu$ on a common finite state space $\mathcal{X}$, we are tasked with transporting $\mu$ to $\nu$ using a given discrete-time Markov chain $X$ with the quickest possible time on average. We define the optimal transport time $H(\mu,\nu)$ as stopping rule of $X$ that gives the minimial expected transport time. This is also known as the access time from $\mu$ to $\nu$ of $X$ in [L. Lov\'{a}sz and P. Winkler. Efficient Stopping Rules for Markov Chains. Proceedings of the Twenty-seventh Annual ACM Symposium on Theory of Computing (STOC '95) 76-82.]. We study bounds of $H(\mu,\nu)$ in various special graphs, which are expressed in terms of the mean hitting times of $X$ as well as parameters of $\mu$ and $\nu$ such as their moments. Among the Markov chains that we study, random walks on complete graphs is a good choice for transport as $H(\mu,\nu)$ grows linearly in $n$, the size of the state space, while that of the winning streak Markov chain exhibits exponential dependence in $n$.
		\smallskip
		
		\noindent \textbf{AMS 2010 subject classifications}: 60J10
		
		\noindent \textbf{Keywords}: hitting time; stopping rule; Markov chains; Skorohod embedding; optimal transport
	\end{abstract}
	
	
	
	\section{Introduction and main results}
	
	Suppose that we are given a discrete-time ergodic Markov chain $X = (X_n)_{n \in \mathbb{N}}$ on a finite state space $\mathcal{X}$ with transition matrix $P = (p_{i,j})_{i,j \in \mathcal{X}}$ that converges to the stationary distribution $\pi = (\pi_i)_{i \in \mathcal{X}}$. We are tasked with transporting from a source distribution $X_0 \sim \mu = (\mu_i)_{i \in \mathcal{X}}$ to a given target distribution $X_T \sim \nu = (\nu_i)_{i \in \mathcal{X}}$ using the shortest possible time on average, where $T$ is a stopping time of $X$. Note that $\mu$ and $\nu$ need not have a common support as long as $\sum_{i \in \mathcal{X}} \mu_i = \sum_{i \in \mathcal{X}} \nu_i = 1.$
	Writing $\Gamma$ to be the set of stopping time of $X$ that transports $\mu$ to $\nu$, we say that the stopping time $T$ is an optimal transport time if it minimizes the mean transport time among $\Gamma$, that is, 
	\begin{definition}[optimal transport time]\label{def:Hmunu}
		$$H(\mu,\nu) := \inf_{T \in \Gamma} \E[T].$$
	\end{definition}
	In the literature, $H(\mu,\nu)$ is known as the access time from $\mu$ to $\nu$ of $X$ \cite{LW95}.  Quite a few well-studied parameters of Markov chains can be formulated in this way. For example, when $\nu = \pi$, this is known as the ``mixing" time of $X$ in \cite{LW98} (This is not to be confused with the classical total-variation mixing time of $X$). On the other hand, if $\mu = \delta_i$ and $\nu = \delta_j$, the Dirac mass at state $i$ and $j$ respectively, then 
	$$H(i,j) := H(\delta_i,\delta_j) = \E_i[\tau_j]$$
	is the mean hitting time of $X$ from $i$ to $j$, where we denote $\tau_j := \inf\{n \geq 0;~X_n = j\}$ and the usual convention of $\inf \emptyset = 0$ applies. It is also obvious to see that, for $j \in \mathcal{X}$,
	$$H(\mu,j) := H(\mu,\delta_j) = \sum_{i \in \mathcal{X}} \mu_i H(i,j).$$
	A closely related problem is the Skorohod embedding problem, where the underlying process is a Brownian motion instead of a finite Markov chain that we consider. We refer interested readers to \cite{O04} for an excellent survey in this direction, and to \cite{MR16} in which they study the Skorohod embedding problem with non-randomized stopping time $T$ in a Markov chain setting.
	A simple yet useful characterization of $H(\mu,\nu)$ that this paper frequently exploits is the following representation:
	\begin{theorem}[Theorem $5.2$ of \cite{LW95}]\label{thm:Hmunu}
		$$H(\mu,\nu) = \max_{j \in \mathcal{X}} \sum_{i \in \mathcal{X}} (\mu_i - \nu_i) \E_i[\tau_j].$$
	\end{theorem}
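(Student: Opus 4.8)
The plan is to establish two matching inequalities. Throughout fix a state $j\in\mathcal X$ and write $g^{(j)}_i := \E_i[\tau_j]$ for the mean hitting time of $j$ from $i$, which are finite by ergodicity. First-step analysis, together with the fact that the expected return time to $j$ equals $1/\pi_j$, yields the Poisson-type equation
\begin{equation}\label{eq:poisson}
\sum_{k\in\mathcal X} p_{i,k}\, g^{(j)}_k - g^{(j)}_i = -1 + \frac{1}{\pi_j}\1_{\{i=j\}}, \qquad i \in \mathcal X,
\end{equation}
which is the identity driving both bounds.

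For the lower bound $H(\mu,\nu)\ge \max_j\sum_i(\mu_i-\nu_i)\E_i[\tau_j]$, I would fix any $T\in\Gamma$ with $\E[T]<\infty$ (otherwise there is nothing to prove) and apply optional stopping to the mean-zero martingale
\begin{equation}
M_n := g^{(j)}_{X_n} - g^{(j)}_{X_0} - \sum_{s=0}^{n-1}\Big(\sum_{k} p_{X_s,k}\, g^{(j)}_k - g^{(j)}_{X_s}\Big).
\end{equation}
Since $\mathcal X$ is finite the increments are bounded, so optional stopping is justified when $\E[T]<\infty$; substituting \eqref{eq:poisson} gives
\begin{equation}
\sum_i \nu_i\,\E_i[\tau_j] - \sum_i \mu_i\,\E_i[\tau_j] = -\E[T] + \frac{1}{\pi_j}\,\E\Big[\sum_{s=0}^{T-1}\1_{\{X_s=j\}}\Big].
\end{equation}
The final expectation is nonnegative, so $\E[T]\ge \sum_i(\mu_i-\nu_i)\E_i[\tau_j]$; taking the infimum over $T\in\Gamma$ and then the maximum over $j$ proves the lower bound. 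I expect this half to be routine.

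The harder direction is the matching upper bound, for which I would pass to an exit-frequency formulation. Given $T\in\Gamma$, set $x_i := \E[\sum_{s=0}^{T-1}\1_{\{X_s=i\}}]$; counting the expected visits to each state $k$ and splitting each visit into ``continue'' versus ``halt'' gives the conservation law $\nu_k-\mu_k = \sum_i x_i p_{i,k} - x_k$ for all $k$, together with $\E[T]=\sum_i x_i$ and $x\ge 0$. The key structural input, established in the stopping-rule theory of \cite{LW95}, is the converse: every nonnegative $x$ solving this linear system is the exit-frequency vector of some rule in $\Gamma$. Granting this, $H(\mu,\nu)$ equals the value of the linear program $\min\{\sum_i x_i : x\ge 0,\ \sum_i x_i(p_{i,k}-\1_{\{i=k\}}) = \nu_k-\mu_k\ \forall k\}$, whose dual is $\max\{\sum_i(\mu_i-\nu_i)z_i : \sum_k(\1_{\{i=k\}}-p_{i,k})z_k \le 1\ \forall i\}$.

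It then remains to identify the optimal dual solution, and this is where the main obstacle lies. Equation \eqref{eq:poisson} says precisely that each hitting-time vector $g^{(j)}$ is dual-feasible, with the $i$-th constraint slack only at $i=j$; this re-derives the lower bound $\sum_i(\mu_i-\nu_i)g^{(j)}_i \le H(\mu,\nu)$. To finish I would show the dual optimum is attained at one of the $g^{(j)}$. Taking $j^\star$ a maximizer, complementary slackness forces any optimal primal $x^\star$ to satisfy $x^\star_{j^\star}=0$, i.e.\ the optimal rule never steps out of $j^\star$, so $j^\star$ is a halting state; conversely one must verify that imposing $x_{j^\star}=0$ in the conservation law still yields a nonnegative solution. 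This nonnegativity is the crux, since fixing $x_{j^\star}=0$ does not a priori keep the remaining exit frequencies nonnegative, and it is exactly the choice $j^\star=\arg\max_j\sum_i(\mu_i-\nu_i)\E_i[\tau_j]$ that makes it succeed; I would establish it either through the vertex structure of the dual polyhedron or through the explicit halting-state construction of \cite{LW95}.
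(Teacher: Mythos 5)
The first thing to note is that the paper does not prove this statement at all: it is imported verbatim as Theorem 5.2 of \cite{LW95}, with no proof given. So your attempt can only be compared against the Lov\'asz--Winkler argument itself, which you partially reconstruct. Your lower-bound half is complete and correct: the Poisson-type equation for $g^{(j)}_i:=\E_i[\tau_j]$ is right, the martingale has bounded increments because $\mathcal{X}$ is finite, and optional stopping (which still applies to randomized rules, since enlarging the filtration by independent randomization keeps $X$ Markov) gives $\E[T]=\sum_i(\mu_i-\nu_i)\E_i[\tau_j]+x_j/\pi_j$ with $x_j\ge 0$, hence $H(\mu,\nu)\ge\max_j\sum_i(\mu_i-\nu_i)\E_i[\tau_j]$.

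The genuine gap is in the upper bound, and it is twofold. First, the ``key structural input'' you grant yourself --- that every nonnegative solution of the conservation law is the exit-frequency vector of some rule in $\Gamma$ (equivalently, that some rule in $\Gamma$ has a halting state) --- is not a side lemma: it is the constructive heart of the theorem, proved in \cite{LW95} by the filling-rule construction. Since you cite it from the very paper whose theorem you are proving, your attempt contains no construction of any stopping rule attaining the claimed value; what you establish unconditionally is only that $H(\mu,\nu)$ and the LP value both dominate $\max_j\sum_i(\mu_i-\nu_i)\E_i[\tau_j]$, i.e.\ the lower bound a second time. Second, even granting realizability, the ``nonnegativity crux'' you leave open has a clean resolution that bypasses complementary slackness and the vertex structure of the dual, and you missed it: by irreducibility, $\ker(P^{T}-I)$ is spanned by $\pi$, so the solutions of the conservation law form the line $\{x^{0}+c\pi:\ c\in\mathbb{R}\}$; moreover, pairing the conservation law with $g^{(j)}$ and using your Poisson equation yields, for every solution $x$ and every state $j$,
\begin{equation*}
\sum_i x_i \;=\; \sum_i(\mu_i-\nu_i)\,\E_i[\tau_j] \;+\; \frac{x_j}{\pi_j}.
\end{equation*}
Taking $c^{\star}=-\min_j x^{0}_j/\pi_j$ produces a nonnegative solution $x^{\star}$ vanishing at the minimizing index $j^{\star}$, and the displayed identity then shows simultaneously that $\sum_i x^{\star}_i$ is the LP optimum and that $j^{\star}$ maximizes $j\mapsto\sum_i(\mu_i-\nu_i)\E_i[\tau_j]$, the maximum being exactly $\sum_i x^{\star}_i$. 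With this observation your duality argument closes completely; the one irreducible missing ingredient remains the filling rule (an explicit rule in $\Gamma$ whose exit frequencies vanish at some state), and until that is supplied the inequality $H(\mu,\nu)\le\max_j\sum_i(\mu_i-\nu_i)\E_i[\tau_j]$ is unproved.
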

	In view of Theorem \ref{thm:Hmunu}, $H(\mu,\nu)$ can be interpreted as a generalized notion of mean hitting time. Note that $H(\mu,\nu)$ is not a metric on the space of probability measures, yet it defines a statistical divergence: it is clear from Definition \ref{def:Hmunu} that $H(\mu,\nu) \geq 0$ and is $0$ if and only if $\mu = \nu$. It is also trivial from Theorem \ref{thm:Hmunu} that the triangle inequality $H(\mu,\nu) \leq H(\mu,\rho) + H(\rho,\nu)$ holds where $\rho$ is also a distribution on $\mathcal{X}$. However, it is in general asymmetric, that is, $H(\mu,\nu) \neq H(\nu,\mu)$.
	
	The aim of this paper is to investigate the extend to which $H(\mu,\nu)$ depends on the structure of the state space as well as to the initial and target distribution $\mu$ and $\nu$. Writing $n := |\mathcal{X}|$ to be the size of the state space, one important aspect of our investigation is how $H(\mu,\nu)$ depends on $n$. This can shed lights on the time complexity of such transportation. In the following, we summarize our results in a table format:
	\begin{table}[htbp]
		\centering
		\caption{Main results}
		\begin{tabular}{l|c|c}
		$X$	&  $H(\mu,\nu)$     & Result \\
		\hline
		symmetric birth-death processes with parameter $p$	&  $O\left(\frac{n^2}{p}\right)$     & Theorem \ref{thm:bd}  \\
		winning streak Markov chain	& $O(2^n)$  & Theorem \ref{thm:ws} \\
		random walk on $d$-dimensional hypercube & $O(n)$      & Corollary \ref{cor:hypercube}  \\
		random walk on connected graphs	& $O(n^3)$  & Corollary \ref{cor:rwgraph} \\
		random walk on the $n$-path	& $O(n^2)$  & Theorem \ref{thm:npath} \\
		random walk on complete graphs	& $O(n)$  & Theorem \ref{thm:cgraph} \\
		random walk on the $n$-star	& $O(n)$ &  Theorem \ref{thm:nstar}
		\end{tabular}%
		\label{tab:main}%
	\end{table}%
	
	Among various Markov chains that we study in this paper, Table \ref{tab:main} seems to suggest that random walk on complete graphs and on the $n$-star are more suitable for such transport as the complexity of $H(\mu,\nu)$ is at most $O(n)$. This is reasonable as walks on these graph structures
	can readily access most parts of the state space relatively quickly. This stands in contrast with walks on linear graphs such as $n$-path or symmetric birth-death processes, in which they can have worst-case optimal transport time to be of the order $O(n^2)$ as the transitions are restricted to nearest-neighbour type. The worst example that we study is the winning streak Markov chains. It exhibits exponential dependence on $n$ in the worst case.
	
	The rest of the paper is organized as follows. In Section \ref{subsec:notations}, we fix commonly used notations throughout the paper. In Section \ref{sec:opt}, we present our results on $H(\mu,\nu)$ under different structures on the state space, namely for general finite Markov chains in Section \ref{subsec:mc}, symmetric birth-death processes in Section \ref{subsec:bd}, the winning streak Markov chain in Section \ref{subsec:ws}, random walk on $d$-dimensional hypercube in Section \ref{subsec:hc} and random walks on connected graphs in Section \ref{subsec:graphs}. Afterwards, we proceed our investigation to various special graphs such as random walk on the $n$-path (Section \ref{subsec:npath}), complete graph (Section \ref{subsec:cgraph}) and $n$-star (Section \ref{subsec:nstar}).
	
	\subsection{Notations}\label{subsec:notations}
	Throughout the paper, we write $\1_A$ to be the indicator function of the set $A$, and for $x \in \mathbb{R}$ we denote $x_+ := \max\{x,0\}$. For $a,b \in \mathbb{Z}$, we write $\llbracket a,b \rrbracket := \{a,a+1,\ldots,b-1,b\}$. For any two probability measures $\mu, \nu$ on $\mathcal{X}$, we define the total variation distance between $\mu$ and $\nu$ to be
	$$||\mu - \nu||_{TV} := \sup_{A \subset \mathcal{X}} |\mu(A) - \nu(A)| = \dfrac{1}{2} \sum_{j \in \mathcal{X}} |\mu(j) - \nu(j)|.$$
	We also denote $Z \sim \mu$ meaning the law of the random variable is $\mu$, and denote the expectation under $\mu$ to be $\E_{Z \sim \mu}$.
	
	\section{Optimal transport on ...}\label{sec:opt}
	
	\subsection{... finite Markov chains}\label{subsec:mc}
	
	In this section, we consider a general finite Markov chain $X$ and the task is to transport $\mu$ to $\nu$ using $X$. Our main result in this section below essentially states that we can bound the optimal transport time $H(\mu,\nu)$ by the maximal mean hitting time. This result is particularly useful as for many Markov chains or random walks on graphs, information for maximal mean hitting time is readily available. We will see how we can apply this bound in subsequent sections which may give tight estimates on the order of $H(\mu,\nu)$.
	
	\begin{theorem}\label{thm:mcmain}
		Suppose that $X$ is a finite Markov chain on $\mathcal{X}$. For the optimal transport time $H(\mu,\nu)$ from $\mu$ to $\nu$, we have
		\begin{align}\label{eq:mcmain}
		H(\mu,\nu) \leq \max_{i,j \in \mathcal{X}} \mathbb{E}_i[\tau_j].
		\end{align}
	\end{theorem}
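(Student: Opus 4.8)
The plan is to start directly from the representation in Theorem \ref{thm:Hmunu}, which expresses $H(\mu,\nu)$ as a maximum over target states $j$ of a signed average of mean hitting times. The whole argument reduces to bounding, for each fixed $j$, the quantity $\sum_{i \in \mathcal{X}} (\mu_i - \nu_i)\,\E_i[\tau_j]$ by the global maximum $\max_{i,j}\E_i[\tau_j]$, and then taking the maximum over $j$. The key structural facts I would exploit are simply that every mean hitting time is nonnegative and that both $\mu$ and $\nu$ are probability distributions summing to $1$; crucially, $\mu$ and $\nu$ need not share support, so I will only use these two properties and nothing about overlap.

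First I would fix an arbitrary $j \in \mathcal{X}$ and split the signed sum into its positive and negative parts,
\begin{align}\label{eq:split}
\sum_{i \in \mathcal{X}} (\mu_i - \nu_i)\,\E_i[\tau_j] = \sum_{i \in \mathcal{X}} \mu_i\,\E_i[\tau_j] - \sum_{i \in \mathcal{X}} \nu_i\,\E_i[\tau_j].
\end{align}
Since $\E_i[\tau_j] \geq 0$ for every $i$ and $\nu_i \geq 0$, the subtracted term $\sum_{i} \nu_i\,\E_i[\tau_j]$ is nonnegative, so it can only decrease the right-hand side of \eqref{eq:split}. Dropping it gives the bound $\sum_{i} (\mu_i - \nu_i)\,\E_i[\tau_j] \leq \sum_{i} \mu_i\,\E_i[\tau_j]$.

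Next I would observe that $\sum_{i} \mu_i\,\E_i[\tau_j]$ is a convex combination of the nonnegative numbers $\{\E_i[\tau_j]\}_{i \in \mathcal{X}}$, because $\sum_i \mu_i = 1$; hence it is at most $\max_{i \in \mathcal{X}} \E_i[\tau_j]$, which in turn is at most the double maximum $\max_{i,j \in \mathcal{X}} \E_i[\tau_j]$. Since the resulting upper bound does not depend on $j$, I would finally take the maximum over $j$ on the left-hand side and invoke Theorem \ref{thm:Hmunu} to conclude $H(\mu,\nu) \leq \max_{i,j \in \mathcal{X}} \E_i[\tau_j]$, which is exactly \eqref{eq:mcmain}.

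I do not expect a genuine obstacle here, as the estimate is essentially a one-line consequence of the hitting-time representation. The only point requiring a little care is the choice of which term to discard: one must keep the $\mu$-weighted sum (a true average bounded by the maximum) and throw away the $\nu$-weighted sum (using its nonnegativity), rather than the reverse. Worth emphasizing is that the bound uses no assumption linking the supports of $\mu$ and $\nu$, consistent with the setup in the introduction where $\mu$ and $\nu$ may have disjoint support.
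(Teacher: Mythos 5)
Your proof is correct and is essentially identical to the paper's: both fix $j$, drop the nonnegative $\nu$-weighted sum $H(\nu,j) = \sum_i \nu_i \E_i[\tau_j]$, bound the remaining $\mu$-weighted average by $\max_{i,j}\E_i[\tau_j]$, and then maximize over $j$ via Theorem \ref{thm:Hmunu}. No gaps; nothing further to add.
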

	
	\begin{proof}
		For a given state $j \in \mathcal{X}$, we see that
		$$H(\mu,j) - H(\nu,j) \leq H(\mu,j) = \sum_{i \in \mathcal{X}} \mu_i H(i,j) \leq \max_{i,j \in \mathcal{X}} \mathbb{E}_i[\tau_j].$$
		Maximizing over $j$ together with Theorem \ref{thm:Hmunu} yields the desired result.
	\end{proof}
	
	Interestingly, this upper bound is independent of $\mu$ or $\nu$, and depends on the choice of Markov chain through the maximal mean hitting time.
	
	\subsection{... symmetric birth-death processes}\label{subsec:bd}
	
	In this section, we present results when the underlying Markov chain $X$ is taken to be a symmetric birth-death process on $\mathcal{X} = \llbracket 0,n \rrbracket$. More precisely, the transition dynamics of $X$ is assumed to be
	\begin{align}\label{eq:bd}
	p_{i,j} = \begin{cases}
	\textrm{(birth probability) } p, \quad \textrm{if } j = i + 1, i \in \llbracket0,n-1 \rrbracket,\\
	\textrm{(death probability) } q := p, \quad \textrm{if } j = i - 1, i \in \llbracket1,n \rrbracket,\\
	\textrm{(holding probability) }r := 1-2p, \quad \textrm{if } j = i, i \in \llbracket0,n \rrbracket,
	\end{cases}
	\end{align}
	where $p \in (0,1/2]$ and $p_{i,j} = 0$ otherwise. In other words, $X$ has the same birth probability and death probability at each state, and this probability is parametrized by $p$.
	\begin{theorem}\label{thm:bd}
		Suppose that $X$ is a birth-death process with the above transition dynamics \eqref{eq:bd}. For the optimal transport time $H(\mu,\nu)$ from $\mu$ to $\nu$ using $X$, we have
		\begin{align}\label{eq:bdmain}
		H(\mu,\nu) = \dfrac{1}{2p} \bigg(&\mathbb{E}_{Z \sim \nu}[Z] - \mathbb{E}_{Z \sim \mu}[Z] + \max_{j \in \llbracket 0,n \rrbracket} \bigg(\mathbb{E}_{Z \sim \mu}\left[\max\{Z,j\}^2 - \min\{Z,j\}^2 \right] \\
		&\quad - \mathbb{E}_{Z \sim \nu}\left[\max\{Z,j\}^2 - \min\{Z,j\}^2 \right] \bigg)\bigg). \nonumber
		\end{align}
		In particular, 
		\begin{align}\label{eq:bdbound}
		\dfrac{1}{2p} \left(\mathbb{E}_{Z \sim \nu}[Z] - \mathbb{E}_{Z \sim \mu}[Z] + \mathbb{E}_{Z \sim \mu}[Z^2] - \mathbb{E}_{Z \sim \nu}[Z^2]\right)_+ \leq H(\mu,\nu) \leq \dfrac{2n^2+n}{2p} = O\left(\dfrac{n^2}{p}\right).
		\end{align}
	\end{theorem}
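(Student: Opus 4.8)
The plan is to reduce everything to mean hitting times via Theorem \ref{thm:Hmunu}, compute those hitting times explicitly for the symmetric birth–death chain, and then simplify the resulting maximum over $j$. First I would record the one-step passage times. Writing $u_k := \E_k[\tau_{k+1}]$ and $d_k := \E_k[\tau_{k-1}]$, a first-step analysis (using that the chain is reversible with uniform stationary distribution, and reflecting at the endpoints $0$ and $n$) gives $u_k = (k+1)/p$ and $d_k = (n-k+1)/p$. Summing along the (nearest-neighbour) path and using the strong Markov property then yields the closed forms
\[
\E_i[\tau_j] = \sum_{k=i}^{j-1} u_k = \frac{(j-i)(i+j+1)}{2p} \quad (i \leq j), \qquad \E_i[\tau_j] = \sum_{k=j+1}^{i} d_k = \frac{(i-j)(2n+1-i-j)}{2p} \quad (i \geq j).
\]

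With these in hand I would substitute into $H(\mu,\nu) = \max_{j} \sum_i (\mu_i-\nu_i)\E_i[\tau_j]$ and reorganise the inner sum. Two observations drive the simplification: first, $\max\{i,j\}^2 - \min\{i,j\}^2 = |i^2 - j^2|$ reproduces exactly the quadratic part ($j^2-i^2$ when $i\leq j$, and $i^2-j^2$ when $i\geq j$) sitting inside $\E_i[\tau_j]$; second, since $\mu$ and $\nu$ are probability measures, $\sum_i(\mu_i-\nu_i) = 1-1 = 0$, so any piece of $\E_i[\tau_j]$ that is constant in $i$ (for fixed $j$) is annihilated by the sum. The goal is to show that, after discarding those $i$-constant pieces, the linear-in-$i$ remainder reassembles into the $j$-independent first-moment term $\E_{Z\sim\nu}[Z] - \E_{Z\sim\mu}[Z]$ (which then pulls outside the maximum), while the quadratic remainder becomes $\E_{Z\sim\mu}\!\left[\max\{Z,j\}^2-\min\{Z,j\}^2\right] - \E_{Z\sim\nu}\!\left[\max\{Z,j\}^2-\min\{Z,j\}^2\right]$, producing exactly \eqref{eq:bdmain}. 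I expect this rearrangement to be the \emph{main obstacle}: matching the piecewise (and $n$-dependent) hitting-time expression against the single closed form in $\max\{Z,j\}$ and $\min\{Z,j\}$, and correctly accounting for which contributions survive the sum $\sum_i(\mu_i-\nu_i)=0$, is the only genuinely computational step and where any hidden subtlety will live.

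Granting the identity \eqref{eq:bdmain}, the bounds in \eqref{eq:bdbound} follow cleanly and I would treat them separately. For the lower bound I retain only the term $j=0$ inside the maximum: since $Z \geq 0$ we have $\max\{Z,0\}^2 - \min\{Z,0\}^2 = Z^2$, so that bracket equals $\E_{Z\sim\mu}[Z^2] - \E_{Z\sim\nu}[Z^2]$, and because the maximum dominates any single term,
\[
2p\,H(\mu,\nu) \geq \E_{Z\sim\nu}[Z] - \E_{Z\sim\mu}[Z] + \E_{Z\sim\mu}[Z^2] - \E_{Z\sim\nu}[Z^2];
\]
combining with $H(\mu,\nu)\geq 0$ from Definition \ref{def:Hmunu} yields the stated $(\cdot)_+$ lower bound. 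For the upper bound I bound each moment crudely using $Z \in \llbracket 0,n \rrbracket$: the first-moment term $\E_{Z\sim\nu}[Z] - \E_{Z\sim\mu}[Z]$ is at most $n$, and in each bracket the two nonnegative contributions $\E_{Z\sim\mu}[\max\{Z,j\}^2]$ and $\E_{Z\sim\nu}[\min\{Z,j\}^2]$ are each at most $n^2$ while the remaining two terms are nonpositive, so the maximum is at most $2n^2$. Hence $2p\,H(\mu,\nu) \leq 2n^2 + n$, which is the claimed $O(n^2/p)$ estimate. As a consistency check one may also obtain an upper bound of the same order directly from Theorem \ref{thm:mcmain}, since the closed forms above give $\max_{i,j}\E_i[\tau_j] = \E_n[\tau_0] = n(n+1)/(2p)$.
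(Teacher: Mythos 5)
Your proposal stalls at exactly the step you yourself flag as the ``main obstacle,'' and the gap is fatal: the rearrangement you promise cannot be carried out, because \eqref{eq:bdmain} is incompatible with the hitting times you (correctly) computed. Your descent times $d_k=(n-k+1)/p$, and hence $\E_i[\tau_j]=(i-j)(2n+1-i-j)/(2p)$ for $i\ge j$, are right: the chain is invariant under the reflection $i\mapsto n-i$, so $\E_k[\tau_{k-1}]=\E_{n-k}[\tau_{n-k+1}]=(n-k+1)/p$. But expanding your formula for $i>j$ gives
\begin{equation*}
2p\,\E_i[\tau_j]=(2n+1)(i-j)-(i^2-j^2),
\end{equation*}
so the quadratic part enters with a \emph{minus} sign, not the $+(i^2-j^2)$ that the identity $\max\{i,j\}^2-\min\{i,j\}^2=|i^2-j^2|$ requires, and the linear remainder $(2n+1)\sum_{i>j}(\mu_i-\nu_i)(i-j)$ is supported on $\{i>j\}$ only, hence is \emph{not} annihilated by $\sum_i(\mu_i-\nu_i)=0$ and cannot merge into the $j$-independent term $\E_{Z\sim\nu}[Z]-\E_{Z\sim\mu}[Z]$. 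No rearrangement can fix this, because \eqref{eq:bdmain} is actually false for this chain: take $\mu=\delta_1$, $\nu=\delta_0$. The bracket in \eqref{eq:bdmain} is maximized at $j=0$ with value $1$, so the right-hand side equals $\frac{1}{2p}(-1+1)=0$, whereas $H(\delta_1,\delta_0)=\E_1[\tau_0]=n/p$ (and trivially $\E_1[\tau_0]\ge 1>0$).

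The reason you cannot reach the paper's formula is that the paper's own proof uses \emph{different} hitting times: Lemma \ref{lem:bdhit} asserts $\E_{k+1}[\tau_k]=k/p$, i.e.\ $\E_i[\tau_j]=(i+j-1)(i-j)/(2p)$ for $i>j$, which yields the unified form $2p\,\E_i[\tau_j]=|i^2-j^2|+(j-i)$ for all $i\neq j$, from which the collapse to \eqref{eq:bdmain} is immediate. But that lemma is wrong --- it gives the absurd value $\E_1[\tau_0]=0$ and contradicts the reflection symmetry your derivation respects --- so the paper proves \eqref{eq:bdmain} from a false premise, and your computation, pushed through honestly, refutes Theorem \ref{thm:bd} rather than proves it. The damage reaches the lower bound of \eqref{eq:bdbound}, which you only derive conditionally on \eqref{eq:bdmain}: for $n=3$, $\mu=\delta_3$, $\nu=\delta_2$ it would claim $H\ge 2/p$, yet $H=\E_3[\tau_2]=1/p$. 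What does survive is the upper bound, and there your consistency check is already the right proof: $\max_{i,j}\E_i[\tau_j]=\E_0[\tau_n]=n(n+1)/(2p)\le(2n^2+n)/(2p)$ combined with Theorem \ref{thm:mcmain}. So the honest conclusion of your approach is not the stated theorem but a corrected one, with \eqref{eq:bdmain} replaced by the (less symmetric) expression your hitting-time formulas actually produce.
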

	
	In Theorem \ref{thm:bd} \eqref{eq:bdmain}, we express the optimal transport time $H(\mu,\nu)$ in terms of the moments of $\mu$ and $\nu$. This formulation can perhaps lead to explicit results as we are tackling with moments of $\mu$ and $\nu$ instead of the mean hitting time of birth-death processes. In both \eqref{eq:bdmain} and \eqref{eq:bdbound}, we observe that the optimal transport time $H(\mu,\nu)$ is inversely proportional to the parameter $p$. This is reasonable as $p$ gets smaller (resp.~bigger), the birth-death chain moves slower (resp.~faster) on the state space, making it harder (resp.~easier) to transport $\mu$ to $\nu$. Before we proceed to the proof of Theorem \ref{thm:bd}, we first state a lemma that yields a closed-form expression for the mean hitting time of birth-death processes. As a side remark, we note that closed-form formulae of mean hitting time of birth-death processes has been investigated in \cite{PT96,Tetali91} using the electric network approach.
	
	\begin{lemma}\label{lem:bdhit}
		Suppose that $X$ is a birth-death process with transition dynamics given by \eqref{eq:bd}. Then the mean hitting times can be written as, for $i,j \in \llbracket 0,n \rrbracket$,
		$$\mathbb{E}_i[\tau_j] = \begin{cases}
		\dfrac{(i+j+1)(j-i)}{2p}, \quad i < j,\\
		\dfrac{(i+j-1)(i-j)}{2p}, \quad i > j.
		\end{cases}$$
	\end{lemma}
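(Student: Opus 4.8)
The plan is to fix the target state $j \in \llbracket 0, n\rrbracket$ and to analyze the function $h_i := \E_i[\tau_j]$ by first-step analysis. For every interior non-target state $i$ (so $i \neq j$ and $0 < i < n$), conditioning on the first move gives $h_i = 1 + p\,h_{i+1} + p\,h_{i-1} + (1-2p)h_i$, which rearranges to the constant second-difference relation $h_{i+1} - 2h_i + h_{i-1} = -1/p$, supplemented by the absorbing condition $h_j = 0$ and by the reflecting relations at the two endpoints. Rather than solving this boundary-value problem globally, I would work with the single-step crossing times $u_i := \E_i[\tau_{i+1}]$ for $i < j$ and $d_i := \E_i[\tau_{i-1}]$ for $i > j$, and recover $\E_i[\tau_j]$ as the telescoping sum $\sum_{k=i}^{j-1} u_k$ when $i<j$ and $\sum_{k=j+1}^{i} d_k$ when $i>j$.

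First I would treat the up-crossing regime $i < j$. First-step analysis at an interior state $i$ decomposes a failed downward step via $\E_{i-1}[\tau_{i+1}] = u_{i-1} + u_i$, which yields the recursion $u_i = 1/p + u_{i-1}$, while the reflecting lower boundary at $0$ supplies the base case $u_0 = 1/p$. Solving gives $u_i = (i+1)/p$, and the Gauss summation $\sum_{k=i}^{j-1}(k+1)/p$ collapses to $\E_i[\tau_j] = (j-i)(i+j+1)/(2p)$, which is exactly the claimed expression for $i<j$.

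Next I would handle the down-crossing regime $i > j$. Here the structural fact I would exploit is the equality of the birth and death probabilities ($q = p$): the death moves obey a recursion of the same first-order shape as the birth moves, so that the single-step descent times $d_i$ satisfy a relation $d_i - d_{i-1} = 1/p$ parallel to $u_i - u_{i-1} = 1/p$, but anchored by the boundary contribution appropriate to descents rather than ascents. Solving this descent recursion and telescoping $\sum_{k=j+1}^{i} d_k$ is intended to deliver the companion coefficient, giving $\E_i[\tau_j] = (i-j)(i+j-1)/(2p)$ and thereby completing the piecewise formula.

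I expect the main obstacle to be precisely the down-crossing analysis: fixing the correct constant in the single-step descent times $d_k$, and hence pinning down the coefficient $(i+j-1)$ as opposed to some other linear factor, requires invoking the birth/death symmetry carefully rather than reflecting it naively, since it is exactly at this step that the boundary behavior enters. Once the two crossing families $u_k$ and $d_k$ are in hand, the remaining work is the routine telescoping identity $\sum_{m=a}^{b} m = (a+b)(b-a+1)/2$ applied in each regime, and assembling the two cases $i<j$ and $i>j$ yields the stated closed form.
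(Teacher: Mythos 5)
Your up-crossing half is correct and is essentially the paper's own argument: the paper likewise telescopes $\E_i[\tau_j]=\sum_{k=i}^{j-1}\E_k[\tau_{k+1}]$ and quotes $\E_k[\tau_{k+1}]=(k+1)/p$ from the literature rather than re-deriving it, so for $i<j$ your first-step derivation of $u_i=(i+1)/p$ from $u_i=1/p+u_{i-1}$, $u_0=1/p$, matches the paper modulo proving the cited identity by hand.

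The genuine gap is exactly where you predicted it would be, and it cannot be closed, because the $i>j$ case of the lemma is false as stated. Write $d_i:=\E_i[\tau_{i-1}]$. First-step analysis at an interior state gives $d_i = 1 + p(d_{i+1}+d_i) + (1-2p)d_i$, i.e.\ $d_i = 1/p + d_{i+1}$: the descent recursion propagates \emph{downward from the top boundary}, so your claimed relation $d_i-d_{i-1}=1/p$ has the wrong orientation, and the correct anchor is the reflecting state $n$ (where, under the natural completion of \eqref{eq:bd} at the endpoints with holding probability $1-p$, one gets $d_n=1/p$). This yields $d_i=(n-i+1)/p$ and hence, after telescoping,
\begin{equation*}
\E_i[\tau_j] \;=\; \sum_{k=j+1}^{i} d_k \;=\; \frac{(i-j)(2n-i-j+1)}{2p}, \qquad i>j,
\end{equation*}
which is also what the mirror symmetry $i\mapsto n-i$ applied to your up-crossing formula gives. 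The stated coefficient $(i+j-1)$ is irreconcilable with any stochastic completion of \eqref{eq:bd}: taking $i=1$, $j=0$ it would force $\E_1[\tau_0]=0$, whereas trivially $\E_1[\tau_0]\ge 1$ (in fact $d_1=n/p$, which grows with $n$). Notably, the paper's own proof commits precisely the error you were circling --- it asserts $\E_{k+1}[\tau_k]=k/p$, ignoring the reflecting barrier at $n$ --- so your instinct that fixing the descent constant is the crux was right; the resolution is that no constant works, the lemma must be corrected as above, and the downstream identity \eqref{eq:bdmain} inherits the error, although the $O(n^2/p)$ conclusion of \eqref{eq:bdbound} survives since $\max_{i,j}\E_i[\tau_j]=n(n+1)/(2p)$ under the corrected formula as well.
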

	
	\begin{proof}
		We first consider the case when $i < j$. Using the strong Markov property and the birth-death property, we see that
		$$\mathbb{E}_i[\tau_j] = \sum_{k=i}^{j-1} \mathbb{E}_k[\tau_{k+1}] = \sum_{k=i}^{j-1} \dfrac{k+1}{p} = \dfrac{(i+j+1)(j-i)}{2p},$$
		where we use the equality $\mathbb{E}_k[\tau_{k+1}] = (k+1)/p$ as in \cite[Section $2.5$]{LPW09}. Next, we treat the case $i > j$. Using again the equality $\mathbb{E}_{k+1}[\tau_{k}] = k/p$ as well as strong Markov property and birth-death property, we have
		$$\mathbb{E}_i[\tau_j] = \sum_{k=j}^{i-1} \mathbb{E}_{k+1}[\tau_{k}] = \sum_{k=j}^{i-1} \dfrac{k}{p} = \dfrac{(i+j-1)(i-j)}{2p}.$$
	\end{proof}
		
	With the above Lemma \ref{lem:bdhit}, we can prove the desired result of Theorem \ref{thm:bd}.
		
	\begin{proof}[Proof of Theorem \ref{thm:bd}]
		Using Theorem \ref{thm:Hmunu}, we have
		\begin{align}\label{eq:bdH}
		H(\mu,\nu) &= \max_{j \in \llbracket 0,n \rrbracket} \sum_{i < j} (\mu_i - \nu_i)\dfrac{(i+j+1)(j-i)}{2p} + \sum_{i > j} (\mu_i - \nu_i)\dfrac{(i+j-1)(i-j)}{2p} \nonumber \\
		&= \dfrac{1}{2p} \max_{j \in \llbracket 0,n \rrbracket} \sum_{i < j} (\mu_i - \nu_i)(j^2 - i^2) + \sum_{i > j} (\mu_i - \nu_i)(i^2 - j^2) + j \sum_{i \neq j} (\mu_i - \nu_i) - \sum_{i \neq j} i(\mu_i - \nu_i).
		\end{align}
		The third and the fourth term can be simplified into
		\begin{align}\label{eq:bdH34}
		j \sum_{i \neq j} (\mu_i - \nu_i) - \sum_{i \neq j} i(\mu_i - \nu_i) = j(\nu_j - \mu_j) + j \mu_j - \mathbb{E}_{Z \sim \mu}[Z] - j\nu_j + \mathbb{E}_{Z \sim \nu}[Z] = \mathbb{E}_{Z \sim \nu}[Z] - \mathbb{E}_{Z \sim \mu}[Z],
		\end{align}
		while the first and the second term reduces to
		\begin{align}\label{eq:bdH12}
		\sum_{i < j} (\mu_i - \nu_i)(j^2 - i^2) + \sum_{i > j} (\mu_i - \nu_i)(i^2 - j^2) &= j^2 \left(\mathbb{E}_{Z \sim \mu}[\1_{\{Z < j\}} - \1_{\{Z > j\}}] - \mathbb{E}_{Z \sim \nu}[\1_{\{Z < j\}} - \1_{\{Z > j\}}]\right) \nonumber \\
		&\quad - \mathbb{E}_{Z \sim \mu}[Z^2\1_{\{Z < j\}} - Z^2\1_{\{Z > j\}}] + \mathbb{E}_{Z \sim \nu}[Z^2\1_{\{Z < j\}} - Z^2\1_{\{Z > j\}}] \nonumber \\
		&= \mathbb{E}_{Z \sim \mu}\left[\max\{Z,j\}^2 - \min\{Z,j\}^2 \right] \nonumber \\
		&\quad - \mathbb{E}_{Z \sim \nu}\left[\max\{Z,j\}^2 - \min\{Z,j\}^2 \right]
		\end{align}
		\eqref{eq:bdmain} follows by plugging \eqref{eq:bdH34} and \eqref{eq:bdH12} back into \eqref{eq:bdH}. For the upper bound of \eqref{eq:bdbound}, using \eqref{eq:bdmain} and the fact that $Z$ is supported on $\llbracket 0,n \rrbracket$, we have
		$$H(\mu,\nu) \leq \dfrac{1}{2p}\bigg(\mathbb{E}_{Z \sim \nu}[Z] + \max_{j \in \llbracket 0,n \rrbracket} \bigg(\mathbb{E}_{Z \sim \mu}\left[\max\{Z,j\}^2\right] + \mathbb{E}_{Z \sim \nu}\left[\min\{Z,j\}^2\right]\bigg)\bigg) \leq \dfrac{2n^2 + n}{2p}.$$
		As for the lower bound of \eqref{eq:bdbound}, we observe that
		$$\max_{j \in \llbracket 0,n \rrbracket} \bigg(\mathbb{E}_{Z \sim \mu}\left[\max\{Z,j\}^2 - \min\{Z,j\}^2 \right] - \mathbb{E}_{Z \sim \nu}\left[\max\{Z,j\}^2 - \min\{Z,j\}^2 \right] \bigg) \geq \mathbb{E}_{Z \sim \mu}\left[ Z^2 \right] - \mathbb{E}_{Z \sim \nu}\left[ Z^2 \right].$$
	\end{proof}
		
\subsection{... winning streak Markov chain}\label{subsec:ws}
		
		In this section, we take the so-called winning streak Markov chain $X$ living on the state space $\mathcal{X} = \llbracket 1,n \rrbracket$. The transition dynamics $P = (p_{i,j})_{i,j \in \mathcal{X}}$ is governed by
		\begin{align}\label{eq:ws}
		p_{i,j} = \begin{cases}
		1/2, \quad \textrm{if } j = i + 1, i \in \llbracket 1,n-1 \rrbracket,\\
		1/2, \quad \textrm{if } j = 1, i \in \llbracket1,n \rrbracket, \\
		1/2, \quad \textrm{if } j = i = n,
		\end{cases}
		\end{align}
		and $0$ otherwise. The expected hitting time of such chain has been calculated in \cite[Example 2]{CZ08}, which is given by
		$$\mathbb{E}_i[\tau_j] = \begin{cases}
		2^j - 2^i, \quad i < j,\\
		2^j, \quad i > j.
		\end{cases}$$
		The winning streak Markov chain is also studied in \cite{LPW09}. In our main result below, we formulate the optimal transport time $H(\mu,\nu)$ in terms of the probability generating function (pgf) of $\mu$ and $\nu$, which subsequently leads to a lower bound \eqref{eq:wsbound} expressed in the differences between the pgf of these two. However, this lower bound may not be tight as the value can be zero.
		
\begin{theorem}\label{thm:ws}
		Suppose that $X$ is a winning streak Markov chain with the above transition dynamics \eqref{eq:ws}. For the optimal transport time $H(\mu,\nu)$ from $\mu$ to $\nu$ using $X$, we have
		\begin{align}\label{eq:wsmain}
		H(\mu,\nu) = \max_{j \in \llbracket 1,n \rrbracket} \mathbb{E}_{Z \sim \nu} \left[2^Z \1_{\{Z \leq j\}} \right] - \mathbb{E}_{Z \sim \mu} \left[2^Z \1_{\{Z \leq j\}} \right].
		\end{align}
		In particular, 
		\begin{align}\label{eq:wsbound}
		\left(\mathbb{E}_{Z \sim \nu} \left[2^Z \right] - \mathbb{E}_{Z \sim \mu} \left[2^Z \right]\right)_+ \leq H(\mu,\nu) \leq 2^n = O\left(2^n\right).
		\end{align}
\end{theorem}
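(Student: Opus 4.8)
The plan is to apply the representation in Theorem~\ref{thm:Hmunu} directly, using the explicit hitting-time formula quoted from \cite{CZ08} for the winning streak chain. I would start from
\begin{align*}
H(\mu,\nu) = \max_{j \in \llbracket 1,n \rrbracket} \sum_{i \in \mathcal{X}} (\mu_i - \nu_i)\, \mathbb{E}_i[\tau_j],
\end{align*}
and split the inner sum into the two regimes $i < j$ and $i > j$ (together with the term $i = j$, which contributes $0$ since $\mathbb{E}_j[\tau_j] = 0$). Using $\mathbb{E}_i[\tau_j] = 2^j - 2^i$ for $i < j$ and $\mathbb{E}_i[\tau_j] = 2^j$ for $i > j$, the idea is to show that all the $2^j$ contributions collapse, leaving only the $-2^i$ terms restricted to $i \le j$.

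\textbf{Key steps.} First I would compute the coefficient of the constant $2^j$. Summing $(\mu_i - \nu_i)$ over $i < j$ contributes $2^j \sum_{i < j}(\mu_i - \nu_i)$, and over $i > j$ contributes $2^j \sum_{i > j}(\mu_i - \nu_i)$; adding the (vanishing) $i=j$ term, these combine to $2^j \sum_{i \in \mathcal{X}} (\mu_i - \nu_i) = 2^j \cdot 0 = 0$, since $\mu$ and $\nu$ are both probability measures. This is the crucial simplification. What remains is the contribution $-\sum_{i < j}(\mu_i - \nu_i)2^i$ from the $i < j$ regime. Rewriting $i < j$ as $i \le j$ (the $i = j$ term may be freely included because the overall $i=j$ contribution was already zero, or equivalently one checks the included term cancels against the bookkeeping above), I obtain
\begin{align*}
H(\mu,\nu) = \max_{j \in \llbracket 1,n \rrbracket} \sum_{i \le j}(\nu_i - \mu_i)\,2^i = \max_{j \in \llbracket 1,n \rrbracket} \mathbb{E}_{Z \sim \nu}\left[2^Z \1_{\{Z \le j\}}\right] - \mathbb{E}_{Z \sim \mu}\left[2^Z \1_{\{Z \le j\}}\right],
\end{align*}
which is exactly \eqref{eq:wsmain}.

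\textbf{The bounds.} For the upper bound in \eqref{eq:wsbound}, I would bound each summand crudely: since $\mathbb{E}_{Z \sim \nu}[2^Z \1_{\{Z \le j\}}] \le \mathbb{E}_{Z \sim \nu}[2^Z \1_{\{Z \le n\}}] \le 2^n$ (the pgf-type quantity never exceeds $2^n$ as $Z$ is supported on $\llbracket 1,n \rrbracket$) and the subtracted $\mu$-term is nonnegative, the maximum is at most $2^n$. For the lower bound, I would simply evaluate the expression inside the maximum at the single choice $j = n$, which removes both indicators (since $Z \le n$ always) and yields $\mathbb{E}_{Z \sim \nu}[2^Z] - \mathbb{E}_{Z \sim \mu}[2^Z]$; since $H(\mu,\nu) \ge 0$ always, taking the positive part gives the stated lower bound.

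\textbf{Main obstacle.} There is no serious analytic difficulty here once Theorem~\ref{thm:Hmunu} and the hitting-time formula are in hand; the work is entirely algebraic. The one point demanding care is the bookkeeping of the $2^j$ terms and the boundary index $i = j$: I must make sure that invoking $\sum_i (\mu_i - \nu_i) = 0$ is done over the \emph{full} state space (so that the $i=j$ term is correctly accounted for) and that the final indicator is $\{Z \le j\}$ rather than $\{Z < j\}$. Verifying this transition from a strict to a non-strict inequality is the only step where an off-by-one error could creep in, so I would double-check it by directly comparing the two expressions at a boundary value of $j$.
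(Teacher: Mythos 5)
Your proof is correct and follows essentially the same route as the paper: apply Theorem \ref{thm:Hmunu} with the explicit hitting times, collapse the $2^j$ coefficient using $\sum_i(\mu_i-\nu_i)=0$, and absorb the $i=j$ boundary term so the indicator becomes $\{Z\le j\}$ --- your bookkeeping of that boundary term is exactly the absorption the paper performs when passing from $\1_{\{Z<j\}}$ to $\1_{\{Z\le j\}}$. The only (cosmetic) difference is the upper bound: the paper invokes the general Theorem \ref{thm:mcmain} together with $\max_{i,j}\E_i[\tau_j]=\E_1[\tau_n]=2^n$, whereas you bound the right-hand side of \eqref{eq:wsmain} directly by $2^n$; both are valid one-line arguments.
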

		
\begin{proof}
		Using again Theorem \ref{thm:Hmunu}, we have
		\begin{align*}\label{eq:wsH}
		H(\mu,\nu) &= \max_{j \in \llbracket 1,n \rrbracket} \sum_{i < j} (\mu_i - \nu_i)(2^j - 2^i) + \sum_{i > j} (\mu_i - \nu_i)2^j \nonumber \\
		&= \max_{j \in \llbracket 1,n \rrbracket} 2^j \sum_{i \neq j} (\mu_i - \nu_i) - \sum_{i < j}2^i(\mu_i - \nu_i) \nonumber \\
		&= \max_{j \in \llbracket 1,n \rrbracket} 2^j (\nu_j - \mu_j) + \mathbb{E}_{Z \sim \nu}[2^Z \1_{\{Z < j\}}] - \mathbb{E}_{Z \sim \mu}[2^Z \1_{\{Z < j\}}] \nonumber \\
		&= \max_{j \in \llbracket 1,n \rrbracket} \mathbb{E}_{Z \sim \nu} \left[2^Z \1_{\{Z \leq j\}} \right] - \mathbb{E}_{Z \sim \mu} \left[2^Z \1_{\{Z \leq j\}} \right].
		\end{align*}
		To see the upper bound in \eqref{eq:wsbound}, we simply use Theorem \ref{thm:mcmain} and $\max_{i,j \in \llbracket 1,n \rrbracket} \mathbb{E}_i[\tau_j] = \mathbb{E}_1[\tau_n] = 2^n.$ As for the lower bound in \eqref{eq:wsbound}, we note that
		$$\max_{j \in \llbracket 1,n \rrbracket} \mathbb{E}_{Z \sim \nu} \left[2^Z \1_{\{Z \leq j\}} \right] - \mathbb{E}_{Z \sim \mu} \left[2^Z \1_{\{Z \leq j\}} \right] \geq \mathbb{E}_{Z \sim \nu} \left[2^Z \right] - \mathbb{E}_{Z \sim \mu} \left[2^Z \right].$$
\end{proof}

To see that the upper bound of \eqref{eq:wsbound} is of the correct order in $n$,
we now look at an example in which we take $n > 1$, $\mu = \delta_1$, the Dirac mass at $1$, and $\nu_i = \frac{1}{2(n-1)}$ for $i \in \llbracket 1,n-1 \rrbracket$, $\nu_n = 1/2$. This leads to
$$\max_{j \in \llbracket 1,n \rrbracket} \mathbb{E}_{Z \sim \nu} \left[2^Z \1_{\{Z \leq j\}} \right] =  \mathbb{E}_{Z \sim \nu} \left[2^Z \right] = \dfrac{2^{n-1}-1}{n-1} + 2^{n-1}, \quad \mathbb{E}_{Z \sim \delta_1}[2^Z \1_{\{Z \leq j\}}] = 2.$$ 
As a result, it follows from \eqref{eq:wsmain} that
$$H(\mu,\nu) = \max_{j \in \llbracket 1,n \rrbracket} \mathbb{E}_{Z \sim \nu} \left[2^Z \1_{\{Z \leq j\}} \right] - \mathbb{E}_{Z \sim \mu} \left[2^Z \1_{\{Z \leq j\}} \right] = \dfrac{2^{n-1}-1}{n-1} + 2^{n-1} - 2 = O(2^n).$$
As another remark, if we are allowed to choose a Markov chain $X$ to transport $\mu$ to $\nu$, it seems that the winning streak Markov chain is a poor choice as the worst case optimal transport time can grow exponentially in $n$, the size of the state space.

\subsection{... random walk on $d$-dimensional hypercube}\label{subsec:hc}

In this section, we study a multivariate example, namely a simple random walk on $d$-dimensional hypercube. The state space $\mathcal{X}$ is the set of binary $d$-duples $\{0,1\}^d$, and the size of the state space is $n := |\mathcal{X}| = 2^d$. At each time step, one of the $d$ coordinates, say $x_j$, is chosen uniformly at random and changed to $1-x_j$.
For optimal transport problem on hypercube, our result below shows that the optimal transport time is linear in the size $n$ of the state space (or exponential in the dimension $d$).

\begin{corollary}\label{cor:hypercube}
	Suppose that $X$ is a simple random walk on $d$-dimensional hypercube. For the optimal transport time $H(\mu,\nu)$ from $\mu$ to $\nu$ using $X$, we have
	\begin{align}
	H(\mu,\nu) \leq (1+o(1))n,
	\end{align}
	where $n = 2^d$.
\end{corollary}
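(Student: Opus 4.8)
The plan is to bound the maximal mean hitting time and then invoke Theorem~\ref{thm:mcmain}, which gives $H(\mu,\nu) \le \max_{i,j \in \mathcal{X}} \mathbb{E}_i[\tau_j]$. Since the right-hand side does not depend on $\mu$ or $\nu$, it suffices to show that $\max_{i,j \in \mathcal{X}} \mathbb{E}_i[\tau_j] = (1+o(1))n$ with $n = 2^d$. First I would exploit the symmetry of the hypercube. The chain $X$ is the simple random walk on the graph $Q_d$, each of whose vertices has degree $d$; by vertex-transitivity and invariance under permuting coordinates, $\mathbb{E}_i[\tau_j]$ depends only on the Hamming distance $k = |i \oplus j|$. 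Tracking this distance reduces $X$ to a birth-death chain on $\llbracket 0,d \rrbracket$ that, from state $k$, steps down to $k-1$ with probability $k/d$ and up to $k+1$ with probability $(d-k)/d$, the target corresponding to state $0$. Writing $h_k$ for the mean hitting time of $0$ from $k$ in this reduced chain, the downward increments $h_k - h_{k-1}$ are strictly positive, so $h_k$ is increasing in $k$ and the maximum is attained at the antipode: $\max_{i,j} \mathbb{E}_i[\tau_j] = h_d = \mathbb{E}_{\mathbf{0}}[\tau_{\mathbf{1}}]$.

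To evaluate $h_d$ I would pass to the electric network associated with $X$. Because $Q_d$ is invariant under the antipodal map, the two directed hitting times between $\mathbf{0}$ and $\mathbf{1}$ coincide, so $\mathbb{E}_{\mathbf{0}}[\tau_{\mathbf{1}}] = \tfrac12 C(\mathbf{0},\mathbf{1}) = |E|\, R_{\mathrm{eff}}(\mathbf{0},\mathbf{1})$, where $C$ is the commute time, $|E| = d\,2^{d-1}$ is the number of edges, and $R_{\mathrm{eff}}$ is the effective resistance with a unit resistor on each edge. Since the coordinate-permutation group fixes $\mathbf{0}$ and $\mathbf{1}$ and acts transitively on each Hamming level, all vertices of a fixed level are at the same potential and may be identified, which collapses the network into $d$ blocks of parallel edges in series; the block between levels $k$ and $k+1$ carries $(d-k)\binom{d}{k}$ unit resistors, whence
$$R_{\mathrm{eff}}(\mathbf{0},\mathbf{1}) = \sum_{k=0}^{d-1} \frac{1}{(d-k)\binom{d}{k}}.$$

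It then remains to estimate this sum. The two extreme terms $k=0$ and $k=d-1$ each equal $1/d$ and together contribute $2/d$; the sum is symmetric under $k \mapsto d-1-k$, its remaining terms are dominated by $k=1$ and $k=d-2$, each of order $1/d^2$, and the interior terms are smaller still, so $\sum_{k=1}^{d-2}(d-k)^{-1}\binom{d}{k}^{-1} = O(1/d^2)$. Hence $R_{\mathrm{eff}}(\mathbf{0},\mathbf{1}) = \tfrac{2}{d}\bigl(1+O(1/d)\bigr)$ and
$$h_d = d\,2^{d-1}\cdot \tfrac{2}{d}\bigl(1+O(1/d)\bigr) = 2^d\bigl(1 + O(1/d)\bigr) = (1+o(1))\,n.$$
Combined with the first paragraph and Theorem~\ref{thm:mcmain}, this yields the claim.

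The conceptual content, and hence the main obstacle, is obtaining the closed form for the antipodal hitting time and its leading asymptotics; the symmetry reduction and the appeal to Theorem~\ref{thm:mcmain} are routine. I would stress that the electric-network viewpoint is what makes the asymptotic transparent: in the equivalent representation $h_d = \sum_{k=1}^{d} (h_k - h_{k-1})$ one is tempted to argue that the single increment from level $1$ already equals $2^d-1$ and bound the rest crudely, but this fails because the increments near the antipode are each $\Theta(1)$ and do not admit a naive bound by the full binomial mass $2^d$. In the resistance sum, by contrast, the middle layers manifestly contribute a negligible $O(1/d^2)$, so the endpoint layers alone govern the constant and the $(1+o(1))$ factor is immediate.
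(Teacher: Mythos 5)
Your proposal is correct, and it reaches the bound by a genuinely different route than the paper. Both proofs have the same skeleton --- bound $\max_{i,j\in\mathcal{X}}\E_i[\tau_j]$ and invoke Theorem \ref{thm:mcmain} --- but the paper disposes of the hitting-time estimate in one line by citing \cite[Section $5.2$]{P94} for $\max_{i,j}\E_i[\tau_j]=\E_{(0,\ldots,0)}[\tau_{(1,\ldots,1)}]=(1+o(1))n$, whereas you prove that fact from scratch. Your chain of reasoning checks out at every step: the projection onto Hamming distance from the target is indeed the Ehrenfest birth--death chain, and since a birth--death chain must pass through $k-1$ to reach $0$ from $k$, the increments $h_k-h_{k-1}=\E_k[\tau_{k-1}]>0$ give monotonicity and place the maximum at the antipodal pair; antipodal symmetry plus the commute-time identity gives $h_d=|E|\,R_{\mathrm{eff}}(\mathbf{0},\mathbf{1})$; the level-collapsing (justified, as you note, because the potential is constant on each Hamming level by the coordinate-permutation symmetry) gives $R_{\mathrm{eff}}=\sum_{k=0}^{d-1}\bigl((d-k)\tbinom{d}{k}\bigr)^{-1}$; and the term estimate is right --- the identity $(d-k)\tbinom{d}{k}=(k+1)\tbinom{d}{k+1}$ confirms the symmetry $k\mapsto d-1-k$, the endpoint terms give $2/d$, the terms $k=1$ and $k=d-2$ are $\Theta(1/d^2)$, and the at most $d$ interior terms are each $O(1/d^3)$, so $R_{\mathrm{eff}}=\tfrac{2}{d}(1+O(1/d))$ and $h_d=2^d(1+O(1/d))$. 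What the two approaches buy: the paper's citation keeps the corollary to two lines; your argument is self-contained, yields the exact closed form $h_d=d\,2^{d-1}\sum_{k=0}^{d-1}\bigl((d-k)\tbinom{d}{k}\bigr)^{-1}$, and sharpens the qualitative $(1+o(1))$ into the quantitative rate $H(\mu,\nu)\leq 2^d\bigl(1+O(1/d)\bigr)=n\bigl(1+O(1/\log_2 n)\bigr)$. Your closing remark about the naive increment decomposition is fair commentary (indeed $h_1=2^d-1$ already accounts for the leading term, and the remaining increments sum to $O(2^d/d)$), though it is not needed for the proof itself.
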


\begin{proof}
	According to \cite[Section $5.2$]{P94}, the maximial mean hitting time on $d$-dimensional hypercube is 
	$$\max_{i,j \in \mathcal{X}} \mathbb{E}_i[\tau_j] = \mathbb{E}_{(0,\ldots,0)}[\tau_{(1,\ldots,1)}] = (1+o(1))n.$$
	Desired result follows from Theorem \ref{thm:mcmain}.
\end{proof}

\subsection{... random walks on connected graphs}\label{subsec:graphs}

In this section and subsequent sections, we study random walk on connected graph $G = (V, E)$, where $V$ is the vertex set and $E$ is the edge set. We denote $d_i$ to be the degree of $i \in V$. For random walk on such a graph, the transition dynamics on $\mathcal{X} = V$ is 
\begin{align}\label{eq:rwgraph}
p_{i,j} = \begin{cases}
\dfrac{1}{d_i}, \quad \textrm{if } (i,j) \in E,\\
0, \quad \textrm{otherwise.} 
\end{cases}
\end{align}
In other words, at each vertex $i$ the random walk pick a vertex $j$ that are connected to $i$ with an edge uniformly at random. For an overview of various results of random walk on graph, we refer readers to the excellent survey of \cite{L96}. In our main result of this section, we prove that $H(\mu,\nu)$ is of $O(n^3)$. However, as we will see in subsequent sections, for a variety of special graphs this bound is quite loose.

\begin{corollary}\label{cor:rwgraph}
	Suppose that $X$ is a random walk on connected graph $G = (V,E)$. For the optimal transport time $H(\mu,\nu)$ from $\mu$ to $\nu$ using $X$, we have
	\begin{align*}
	H(\mu,\nu) \leq n(n-1)^2 = O(n^3),
	\end{align*}
	where $n := |V|$.
\end{corollary}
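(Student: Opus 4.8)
The plan is to apply Theorem \ref{thm:mcmain}, which reduces the problem to bounding the maximal mean hitting time $\max_{i,j \in V} \mathbb{E}_i[\tau_j]$ of the random walk on $G$. Once this maximal hitting time is shown to be at most $n(n-1)^2$, the corollary follows immediately since Theorem \ref{thm:mcmain} gives $H(\mu,\nu) \leq \max_{i,j} \mathbb{E}_i[\tau_j]$ independently of $\mu$ and $\nu$.

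To bound the maximal mean hitting time, I would invoke the classical commute-time identity for reversible random walks on graphs. Writing $m := |E|$ for the number of edges, the commute time between $i$ and $j$ satisfies $\mathbb{E}_i[\tau_j] + \mathbb{E}_j[\tau_i] = 2m R_{ij}$, where $R_{ij}$ is the effective resistance between $i$ and $j$ when every edge is assigned unit resistance. Since effective resistance is bounded above by graph distance, and the graph distance in a connected graph on $n$ vertices is at most $n-1$, we get $R_{ij} \leq n-1$. Combining this with $2m R_{ij} \geq \mathbb{E}_i[\tau_j]$ yields $\mathbb{E}_i[\tau_j] \leq 2m(n-1)$. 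Finally, for any connected graph on $n$ vertices one has $m \leq \binom{n}{2} = \tfrac{n(n-1)}{2}$, so $2m \leq n(n-1)$, giving $\mathbb{E}_i[\tau_j] \leq n(n-1)^2$.

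The main step is therefore the chain of inequalities $\mathbb{E}_i[\tau_j] \leq \mathbb{E}_i[\tau_j] + \mathbb{E}_j[\tau_i] = 2m R_{ij} \leq 2m(n-1) \leq n(n-1)^2$. I expect the least routine part to be justifying $R_{ij} \leq n-1$; this follows from Rayleigh's monotonicity principle, since a spanning path (or more simply, any path between $i$ and $j$ in the graph) has resistance equal to its length, which is at most $n-1$, and adding the remaining edges of $G$ in parallel can only decrease the effective resistance. An alternative route avoids effective resistance entirely and uses the standard worst-case bound $\max_{i,j} \mathbb{E}_i[\tau_j] \leq \tfrac{2}{27}n^3(1+o(1))$ or the cleaner bound via the lollipop-graph analysis, but the commute-time argument is the most transparent way to obtain the stated $O(n^3)$ constant. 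The only genuine obstacle is ensuring the constant $n(n-1)^2$ matches; the commute-time bound delivers exactly this, so no further optimization is needed.
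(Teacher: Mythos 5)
Your proposal is correct, and its top-level structure coincides with the paper's: both apply Theorem \ref{thm:mcmain} to reduce the corollary to the single bound $\max_{i,j \in V} \mathbb{E}_i[\tau_j] \le n(n-1)^2$, which makes the conclusion independent of $\mu$ and $\nu$. The difference is in how that hitting-time bound is justified. The paper simply cites \cite[Theorem 2]{Lawler86} as a black box, whereas you derive the bound self-containedly from the commute-time identity $\mathbb{E}_i[\tau_j] + \mathbb{E}_j[\tau_i] = 2m R_{ij}$, the estimate $R_{ij} \le d(i,j) \le n-1$ via Rayleigh monotonicity, and the edge count $m \le \binom{n}{2}$; every step is standard and the constant $n(n-1)^2$ comes out exactly, so no gap remains. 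Your route buys two things: it removes the dependence on an external reference, and it exposes the sharper intermediate bound $H(\mu,\nu) \le 2m(n-1)$, which is considerably better than $n(n-1)^2$ for sparse graphs (for instance $O(n^2)$ for trees or bounded-degree graphs), a refinement invisible in the paper's citation-based proof. The paper's route buys brevity, and one should note that the commute-time identity you invoke is itself a nontrivial theorem (of Chandra, Raghavan, Ruzzo, Smolensky and Tiwari), so your argument is ``elementary'' only modulo that standard result.
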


\begin{proof}
	According to \cite[Theorem $2$]{Lawler86}, the maximial mean hitting time on a connected graph is upper bounded by
	$$\max_{i,j \in \mathcal{X}} \mathbb{E}_i[\tau_j] \leq n(n-1)^2.$$
	Desired result follows from Theorem \ref{thm:mcmain}.
\end{proof}

The stationary distribution $\pi = (\pi_i)$ of $X$ is given by, for $i \in V$,
$$\pi_i = \dfrac{d_i}{2|E|}.$$ In our next result of this section, we specialize into the case of random walk on graphs with symmetric hitting times. The cover time of such a random walk has been studied in \cite{DS90, P92}. Notable families of graphs that belong to this class are complete graphs, vertex-transitive graphs and distance-regular graphs. For random walk on graphs with symmetric hitting time,
we can express the optimal transport time in terms of average hitting time as follows.

\begin{proposition}[Random walk with symmetric hitting times]\label{prop:rwsym}
	Suppose that $X$ is a random walk on connected graph $G = (V,E)$ with stationary distribution $\pi$ and symmetric hitting time, i.e. for all $i, j \in V$, $\mathbb{E}_i[\tau_j] = \mathbb{E}_j[\tau_i]$. For the optimal transport time $H(\pi,\nu)$ and $H(\mu,\pi)$, we have
	\begin{align}
	H(\pi,\nu) &= t_{av} - \min_j \sum_i \nu_i \mathbb{E}_i[\tau_j] \leq t_{av}, \label{eq:rwsympinu} \\
	H(\mu,\pi) &= \max_j \sum_i \mu_i \mathbb{E}_i[\tau_j] - t_{av} \leq \max_{i,j} \mathbb{E}_i[\tau_j] - t_{av}, \label{eq:rwsymmupi}
	\end{align}
	where 
	$$n := |V|, \quad t_{av} := \sum_{i,j}\pi_i\pi_j\mathbb{E}_i[\tau_j] = \sum_{i=2}^n \dfrac{1}{1-\lambda_i},$$
	and $1 = \lambda_1 > \lambda_2 \geq \ldots \geq \lambda_n$ are the eigenvalues of the transition matrix $P$ of the random walk.
\end{proposition}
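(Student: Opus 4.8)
The plan is to apply the characterization in Theorem~\ref{thm:Hmunu} and specialize to the two cases where one of the two distributions is the stationary distribution $\pi$, exploiting the symmetry hypothesis $\mathbb{E}_i[\tau_j] = \mathbb{E}_j[\tau_i]$. First I would recall the well-known random target identity: for a reversible (or more generally symmetric-hitting-time) chain, the quantity $\sum_i \pi_i \mathbb{E}_i[\tau_j]$ is independent of the target state $j$ and equals $t_{av} = \sum_{i,j} \pi_i \pi_j \mathbb{E}_i[\tau_j]$. This is the key structural fact, and it is precisely here that symmetry enters: the stationary average of hitting times to any fixed target collapses to a single constant.

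For the first identity \eqref{eq:rwsympinu}, I would write $H(\pi,\nu) = \max_j \sum_i (\pi_i - \nu_i)\mathbb{E}_i[\tau_j]$ directly from Theorem~\ref{thm:Hmunu}. Splitting the sum and invoking the random target identity gives $\sum_i \pi_i \mathbb{E}_i[\tau_j] = t_{av}$ for every $j$, so the expression becomes $\max_j \left( t_{av} - \sum_i \nu_i \mathbb{E}_i[\tau_j] \right) = t_{av} - \min_j \sum_i \nu_i \mathbb{E}_i[\tau_j]$. The bound $\leq t_{av}$ then follows because $\sum_i \nu_i \mathbb{E}_i[\tau_j] \geq 0$. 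The second identity \eqref{eq:rwsymmupi} is entirely symmetric: $H(\mu,\pi) = \max_j \sum_i (\mu_i - \pi_i)\mathbb{E}_i[\tau_j] = \max_j \sum_i \mu_i \mathbb{E}_i[\tau_j] - t_{av}$, and the upper bound $\leq \max_{i,j}\mathbb{E}_i[\tau_j] - t_{av}$ follows by bounding $\sum_i \mu_i \mathbb{E}_i[\tau_j] \leq \max_{i,j}\mathbb{E}_i[\tau_j]$ since $\mu$ is a probability measure.

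The remaining piece is the spectral formula $t_{av} = \sum_{i=2}^n \frac{1}{1-\lambda_i}$. For this I would use the standard eigentime identity for reversible chains, which expresses the stationary-averaged mean hitting time as the sum of reciprocals of the spectral gaps $1 - \lambda_i$ over the nontrivial eigenvalues; this is classical and can be cited rather than rederived. Note that the symmetric-hitting-time hypothesis is weaker than reversibility in general, but for the families named in the paper (vertex-transitive, distance-regular, complete graphs) the walks are reversible, so the eigentime identity applies.

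I expect the main obstacle to be justifying the random target identity under the stated hypothesis rather than full reversibility. The cleanest route is to observe that symmetry of hitting times is exactly the condition that makes $\sum_i \pi_i \mathbb{E}_i[\tau_j]$ constant in $j$: since $\pi$ is stationary one has the general relation $\sum_i \pi_i \mathbb{E}_i[\tau_j] = \frac{1}{\pi_j}\sum_i \pi_i \pi_j \mathbb{E}_i[\tau_j]$-type manipulations, but the slickest argument uses that the target-independence of $\sum_i \pi_i \mathbb{E}_i[\tau_j]$ holds whenever $\mathbb{E}_i[\tau_j]=\mathbb{E}_j[\tau_i]$, which I would verify by a short averaging computation against $\pi$. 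Once that constancy is in hand, both identities and all four inequalities follow immediately, so the only genuine care needed is in this one symmetry-to-constancy step and in matching the precise hypothesis to the cited eigentime identity.
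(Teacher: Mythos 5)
Your proposal is correct and follows essentially the same route as the paper: both apply Theorem~\ref{thm:Hmunu} and then combine the symmetric-hitting-time hypothesis with the random target lemma to conclude that $\sum_i \pi_i \mathbb{E}_i[\tau_j]$ is independent of $j$ and equal to $t_{av}$, after which the identities and bounds in \eqref{eq:rwsympinu} and \eqref{eq:rwsymmupi} follow immediately. Your closing observation that it is symmetry (rather than the random target lemma alone, which concerns start-independence of $\sum_j \pi_j \mathbb{E}_i[\tau_j]$) that yields target-independence is exactly the paper's second-equality step, and your citation of the eigentime identity for the spectral formula matches what the paper leaves implicit.
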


\begin{proof}
	We only prove \eqref{eq:rwsympinu}, as \eqref{eq:rwsymmupi} can be proved similarly. By Theorem \ref{thm:Hmunu}, we see that
	\begin{align*}
		H(\pi,\nu) &= \max_j \sum_i (\pi_i - \nu_i) \mathbb{E}_i[\tau_j] = \max_j \sum_i \pi_i \mathbb{E}_j[\tau_i] - \nu_i \E_i[\tau_j] = t_{av} - \min_j \sum_i \nu_i \mathbb{E}_i[\tau_j] \leq t_{av},
	\end{align*}
	where we use the symmetric hitting time property in the second equality, and the random target lemma (see e.g. \cite[Lemma $10.1$]{LPW09}) in the third equality.
\end{proof}

Comparing \eqref{eq:rwsymmupi} with the general result of Theorem \ref{thm:mcmain}, we see that $t_{av}$ serves as a correction term. In addition, the results in Proposition \ref{prop:rwsym} demonstrate a connection with the eigenvalues of the underlying random walk. 

\subsection{... random walk on the $n$-path}\label{subsec:npath}

In this section, we specialize into the case of random walk on the $n$-path. This is one among many classical examples of random walk on graphs that have been studied intensively, see for example \cite{AF14} or \cite{L96}. It can be regarded as a birth-death process on $\llbracket 0,n \rrbracket$ with reflecting boundaries at $0$ and $n$, where its transition dynamic is given by
\begin{align}\label{eq:npath}
p_{i,j} = \begin{cases}
1/2, \quad \textrm{if } j = i \pm 1, i \in \llbracket 1,n-1 \rrbracket,\\
1, \quad \textrm{if } i = 0, j = 1 \, \textrm{or } i = n, j = n-1, \\
0, \quad \textrm{otherwise.} 
\end{cases}
\end{align}

\begin{theorem}\label{thm:npath}
	Suppose that $X$ is a random walk on the $n$-path with transition given by \eqref{eq:npath}. For the optimal transport time $H(\mu,\nu)$ from $\mu$ to $\nu$ using $X$, we have
	\begin{align}\label{eq:npathmain}
	H(\mu,\nu) = \mathbb{E}_{Z \sim \nu}[Z^2] - \mathbb{E}_{Z \sim \mu}[Z^2] + 2n \max_{j \in \llbracket 0,n \rrbracket} \left( \mathbb{E}_{Z \sim \mu} (Z-j)_+ - \mathbb{E}_{Z \sim \nu} (Z-j)_+ \right).
	\end{align}
	In particular, 
	\begin{align}\label{eq:npathbound}
	\left(\mathbb{E}_{Z \sim \nu}[Z^2] - \mathbb{E}_{Z \sim \mu}[Z^2] + 2n \left( \mathbb{E}_{Z \sim \mu}[Z] - \mathbb{E}_{Z \sim \nu}[Z] \right)\right)_+ \leq H(\mu,\nu) \leq n^2 = O\left(n^2\right).
	\end{align}
\end{theorem}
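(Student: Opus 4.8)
The plan is to mirror the strategy used for the symmetric birth-death process in Theorem \ref{thm:bd}: first obtain a closed form for the mean hitting times $\mathbb{E}_i[\tau_j]$ of the reflecting walk on $\llbracket 0,n\rrbracket$, then substitute these into the Lov\'asz--Winkler representation of Theorem \ref{thm:Hmunu} and simplify using the identity $\sum_i(\mu_i-\nu_i)=0$.

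First I would record the hitting times in a short lemma, obtained by first-step analysis. The upward gaps $h_k := \mathbb{E}_k[\tau_{k+1}]$ satisfy $h_0 = 1$ (reflection at $0$) and $h_k = 1 + \tfrac{1}{2}(h_{k-1}+h_k)$ for $1 \le k \le n-1$, hence $h_k = 2k+1$; telescoping gives $\mathbb{E}_i[\tau_j] = \sum_{k=i}^{j-1}(2k+1) = j^2 - i^2$ for $i<j$. The downward gaps $g_k := \mathbb{E}_{k+1}[\tau_k]$ satisfy $g_{n-1}=1$ (reflection at $n$) and $g_k = 1 + \tfrac{1}{2}(g_{k+1}+g_k)$, hence $g_k = 2n-2k-1$, so that $\mathbb{E}_i[\tau_j] = \sum_{k=j}^{i-1}(2n-2k-1) = (i-j)(2n-i-j)$ for $i>j$. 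One can sanity-check these against the reflection symmetry $i \mapsto n-i$, which swaps the two boundaries.

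Next I would substitute into $H(\mu,\nu) = \max_j \sum_i(\mu_i-\nu_i)\mathbb{E}_i[\tau_j]$, splitting the inner sum over $i<j$ and $i>j$. Writing $\Delta_i := \mu_i - \nu_i$ and expanding $(i-j)(2n-i-j) = 2n(i-j) + (j^2 - i^2)$, the two pieces combine into $j^2\sum_{i\ne j}\Delta_i - \sum_{i\ne j}\Delta_i i^2 + 2n\sum_{i>j}\Delta_i(i-j)$. The first two sums are where the cancellation occurs: by $\sum_i\Delta_i=0$ the first equals $-\Delta_j j^2$, while completing $\sum_{i\ne j}\Delta_i i^2$ to $\sum_i\Delta_i i^2$ produces a compensating $+\Delta_j j^2$, so the two terms in $\Delta_j j^2$ cancel exactly. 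What survives is the $j$-independent quantity $-\sum_i\Delta_i i^2 = \mathbb{E}_{Z\sim\nu}[Z^2] - \mathbb{E}_{Z\sim\mu}[Z^2]$ together with $2n\sum_{i>j}\Delta_i(i-j)$. Recognizing $\sum_{i>j}\Delta_i(i-j) = \mathbb{E}_{Z\sim\mu}(Z-j)_+ - \mathbb{E}_{Z\sim\nu}(Z-j)_+$ and pulling the $j$-free term out of the maximum yields exactly \eqref{eq:npathmain}.

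Finally, for the bounds in \eqref{eq:npathbound}: the upper bound $H(\mu,\nu)\le n^2$ follows from Theorem \ref{thm:mcmain} once I verify $\max_{i,j}\mathbb{E}_i[\tau_j] = n^2$, namely $j^2-i^2 \le n^2$ (attained at $i=0,j=n$) and, for $i>j$, that $(i-j)(2n-i-j)$ is maximized at $j=0,i=n$ where it equals $n^2$. For the lower bound I would evaluate the bracket in \eqref{eq:npathmain} at the particular choice $j=0$, where $(Z-0)_+ = Z$ turns the correction term into $2n(\mathbb{E}_{Z\sim\mu}[Z] - \mathbb{E}_{Z\sim\nu}[Z])$; since $H(\mu,\nu)$ is the maximum over $j$ it dominates this value, and combining with $H(\mu,\nu)\ge 0$ from Definition \ref{def:Hmunu} gives the positive-part lower bound. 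I expect the only genuinely delicate step to be the bookkeeping in the third paragraph: tracking the excluded index $i=j$ (which contributes $\mathbb{E}_j[\tau_j]=0$) and the two competing $\Delta_j j^2$ terms so that the cancellation is exact.
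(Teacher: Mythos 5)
Your proposal is correct and follows essentially the same route as the paper: a closed-form hitting-time lemma for the reflecting walk ($j^2-i^2$ for $i<j$ and $(i-j)(2n-i-j)=(i-j)2n-(i^2-j^2)$ for $i>j$), substitution into Theorem \ref{thm:Hmunu} with the $\sum_i(\mu_i-\nu_i)=0$ cancellation isolating the $j$-independent second-moment term, the upper bound via Theorem \ref{thm:mcmain} with $\max_{i,j}\mathbb{E}_i[\tau_j]=\mathbb{E}_0[\tau_n]=n^2$, and the lower bound by evaluating the maximum at $j=0$. The only cosmetic difference is in the lemma: you derive both hitting-time formulas self-containedly by one-step gap recursions, whereas the paper cites the literature for the case $i<j$ and runs an induction on $i$ for the case $i>j$; the resulting formulas agree.
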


As our first remark, we note that this upper bound \eqref{eq:npathbound} of the $n$-path is tighter than the general $O(n^3)$ bound of Corollary \ref{cor:rwgraph}. When $\mu = \delta_0$ and $\nu = \delta_n$, the upper bound in \eqref{eq:npathbound} is trivally attained as $H(\delta_0, \delta_n) = H(0,n) = \mathbb{E}_0[\tau_n] = n^2$. To demonstrate a non-trivial instance in which both the upper and lower bounds in \eqref{eq:npathbound} are of the same order in $n$, i.e. $H(\mu,\nu) = \Theta(n^2)$, we consider the case when $\mu$ is a discrete uniform on $\llbracket 0,n \rrbracket$ while $\nu$ is binomially distributed with parameters $n$ and $p \in (0,1-1/\sqrt{3})$. In this case, we have 
\begin{align*}
\mathbb{E}_{Z \sim \mu}[Z] &= \dfrac{n}{2}, \quad \mathbb{E}_{Z \sim \mu}[Z^2] = \dfrac{n(2n+1)}{6},\\
\mathbb{E}_{Z \sim \nu}[Z] &= np, \quad \mathbb{E}_{Z \sim \nu}[Z^2] = np(1-p)+n^2p^2.
\end{align*}
As a result, the lower bound in \eqref{eq:npathbound} now reads
$$\mathbb{E}_{Z \sim \nu}[Z^2] - \mathbb{E}_{Z \sim \mu}[Z^2] + 2n \left( \mathbb{E}_{Z \sim \mu}[Z] - \mathbb{E}_{Z \sim \nu}[Z] \right) = n^2\left(p^2 - 2p + \dfrac{2}{3}\right) + n \left(p(1-p) - \dfrac{1}{6}\right).$$
For $p \in (0,1-1/\sqrt{3})$, $p^2 - 2p + 2/3 > 0$ and so both the lower and upper bound of \eqref{eq:npathbound} gives an order of $n^2$. Before we state the proof of Theorem \ref{thm:npath}, we first give a lemma that writes out the explicit expression for the mean hitting time of the random walk on the $n$-path.

\begin{lemma}\label{lem:npathhit}
	Suppose that $X$ is a random walk on the $n$-path with transition given by \eqref{eq:npath}. Then the mean hitting times can be written as, for $i,j \in \llbracket 0,n \rrbracket$,
	$$\mathbb{E}_i[\tau_j] = \begin{cases}
	j^2 - i^2, \quad i < j,\\
	(i-j)2n - (i^2-j^2), \quad i > j.
	\end{cases}$$
\end{lemma}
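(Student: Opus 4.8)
The plan is to mirror the decomposition used in the proof of Lemma~\ref{lem:bdhit}: express each mean hitting time as a telescoping sum of nearest-neighbour (single-step) hitting times via the strong Markov property, and then determine those single-step costs from short first-step recurrences. The crucial structural observation is that the walk cannot cross its target before hitting it, so only the \emph{near} reflecting boundary enters each case. Precisely, for $i<j$ the chain started at $i$ stays in $\llbracket 0, j\rrbracket$ until $\tau_j$, so the boundary at $n$ is irrelevant and $\mathbb{E}_i[\tau_j]=\sum_{k=i}^{j-1} u_k$, where $u_k:=\mathbb{E}_k[\tau_{k+1}]$; symmetrically, for $i>j$ the chain stays in $\llbracket j,n\rrbracket$ until $\tau_j$, the boundary at $0$ is irrelevant, and $\mathbb{E}_i[\tau_j]=\sum_{k=j}^{i-1} d_k$, where $d_k:=\mathbb{E}_{k+1}[\tau_k]$.

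Next I would pin down the single-step costs. Conditioning on the first step and using the strong Markov property through $\mathbb{E}_{k-1}[\tau_{k+1}]=u_{k-1}+u_k$, the interior states $1\le k\le n-1$ give $u_k = 1+\frac12(u_{k-1}+u_k)$, i.e. $u_k=u_{k-1}+2$, while the reflecting boundary at $0$ forces the base case $u_0=\mathbb{E}_0[\tau_1]=1$; hence $u_k=2k+1$. By the same one-step analysis run from the top, the reflecting boundary at $n$ forces $d_{n-1}=\mathbb{E}_n[\tau_{n-1}]=1$, and the interior relation $d_k=1+\frac12(d_{k+1}+d_k)$ gives $d_k=d_{k+1}+2$, so $d_k=2n-2k-1$.

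Finally I would carry out the two telescoping sums. Since $u_k=2k+1=(k+1)^2-k^2$, the upward sum collapses to $\sum_{k=i}^{j-1}u_k=j^2-i^2$, giving the first case. Writing $g(m):=2nm-m^2$, one has $d_k=2n-2k-1=g(k+1)-g(k)$, so the downward sum collapses to $\sum_{k=j}^{i-1}d_k=g(i)-g(j)=(i-j)2n-(i^2-j^2)$, giving the second case. I expect the only genuine subtlety to be the barrier argument of the first paragraph---justifying that the far boundary never participates before the target is hit---together with keeping the two boundary base cases ($u_0=1$ versus $d_{n-1}=1$) straight; once these are in place, the recurrences and the telescoping are routine.
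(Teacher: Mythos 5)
Your proof is correct, and its skeleton is the same as the paper's: both reduce $\mathbb{E}_i[\tau_j]$ to a sum of nearest-neighbour hitting times via the strong Markov property (your telescoping sums are exactly the paper's induction on $i$, unrolled). The genuine difference is how the one-step costs are obtained. The paper does not derive them from scratch: for $i<j$ it simply cites Aldous--Fill (Chapter 5, Example 8), and for $i>j$ it gets the base case $\mathbb{E}_{j+1}[\tau_j]=2(n-j)-1$ from the return-time identity (the expected return time to $j$ in the sub-path on $\llbracket j,n\rrbracket$ equals the reciprocal of its stationary mass, $2(n-j)$), then inducts upward. You instead pin down $u_k=\mathbb{E}_k[\tau_{k+1}]$ and $d_k=\mathbb{E}_{k+1}[\tau_k]$ by first-step recurrences ($u_k=1+\tfrac12(u_{k-1}+u_k)$ with $u_0=1$, and symmetrically $d_k$ with $d_{n-1}=1$), which is routine to verify and makes the lemma fully self-contained, at the cost of being slightly longer. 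Both derivations of the one-step costs are valid; yours buys independence from external references and from stationarity computations, while the paper's buys brevity by leaning on the known return-time formula and the literature. Your ``barrier'' observation (the far reflecting boundary never matters before the target is hit) is exactly the nearest-neighbour structure that justifies the telescoping in both proofs, so nothing is missing.
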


\begin{proof}
	For $i < j$, the result can be found in \cite[Chapter $5$ Example $8$]{AF14}. For the case of $i > j$, we adapt a similar proof and proceed by induction on $i$. When $i = j+1$, we observe that $\mathbb{E}_{j+1}[\tau_j]$ is one less than the expected return time from $j$ to $j$ in a graph of $n-j+1$ nodes starting on the left, which is $2(n-j)-1$. Suppose that $\mathbb{E}_i[\tau_j] = (i-j)2n - (i^2-j^2)$ holds for some $i$, then
	$$\mathbb{E}_{i+1}[\tau_j] = \mathbb{E}_{i+1}[\tau_i] + \mathbb{E}_i[\tau_j] = 2(n-i) - 1 + (i-j)2n - (i^2-j^2) = (i+1-j)2n - ((i+1)^2-j^2).$$
\end{proof}

\begin{proof}[Proof of Theorem \ref{thm:npath}]
	Using Theorem \ref{thm:Hmunu} together with Lemma \ref{lem:npathhit}, we have
	\begin{align}\label{eq:npathH}
	H(\mu,\nu) &= \max_{j \in \llbracket 0,n \rrbracket} \sum_{i < j} (\mu_i - \nu_i)(j^2 - i^2) + \sum_{i > j} \left((\mu_i - \nu_i)(i-j)2n + (j^2-i^2) \right) \nonumber \\
	&= \max_{j \in \llbracket 0,n \rrbracket} \sum_{i} (\mu_i - \nu_i)(j^2 - i^2) + 2n \left(\mathbb{E}_{Z \sim \mu}(Z-j)_+ - \mathbb{E}_{Z \sim \nu}(Z-j)_+ \right) \nonumber \\
	&= \mathbb{E}_{Z \sim \nu}[Z^2] - \mathbb{E}_{Z \sim \mu}[Z^2] + 2n \max_{j \in \llbracket 0,n \rrbracket} \left( \mathbb{E}_{Z \sim \mu} (Z-j)_+ - \mathbb{E}_{Z \sim \nu} (Z-j)_+ \right). \nonumber
	\end{align}
	For the upper bound of \eqref{eq:npathbound}, we apply Theorem \ref{thm:mcmain} and Lemma \ref{lem:npathhit} to see that
	$$\max_{i,j \in \llbracket 0,n \rrbracket} \mathbb{E}_i[\tau_j] = \mathbb{E}_{0}[\tau_{n}] = n^2.$$
	As for the lower bound of \eqref{eq:npathbound}, we use \eqref{eq:npathmain} and note that
	$$2n \max_{j \in \llbracket 0,n \rrbracket} \left( \mathbb{E}_{Z \sim \mu} (Z-j)_+ - \mathbb{E}_{Z \sim \nu} (Z-j)_+ \right) \geq 2n \left( \mathbb{E}_{Z \sim \mu}[Z] - \mathbb{E}_{Z \sim \nu}[Z] \right).$$
\end{proof}

\subsection{... random walk on complete graphs}\label{subsec:cgraph}

In this section, we look into the example of random walk on complete graphs on $\mathcal{X} = \llbracket 0,n \rrbracket$, with transition dynamics driven by, for $i \neq j \in \mathcal{X}$, $p_{i,i} = 0$ and $p_{i,j} = 1/n$, and the stationary distribution is well-known to be a discrete uniform $\pi_i = 1/(n+1)$. Alternatively, we can take the degree of node $i$ as $d_i = n$ in \eqref{eq:rwgraph}. According to \cite[Chapter $5$ Example $9$]{AF14}, the mean hitting times of this random walk are
$$\E_i[\tau_j] = n.$$
As our main result below illustrates, the optimal transport time is of the order $O(n)$.
\begin{theorem}\label{thm:cgraph}
	Suppose that $X$ is a random walk on the complete graph. For the optimal transport time $H(\mu,\nu)$ from $\mu$ to $\nu$ using $X$, we have
	\begin{align}
	H(\mu,\nu) = n \max_{j \in \llbracket 0,n \rrbracket} (\nu_j - \mu_j).
	\end{align}
	In particular, for $j \in \llbracket 0,n \rrbracket$,
	\begin{align}\label{eq:cgraphbound}
	n (\nu_j - \mu_j)_+ \leq H(\mu,\nu) \leq n ||\mu - \nu||_{TV} \leq n = O(n).
	\end{align}
	The distribution within the class of Dirac mass $\mathcal{P} := \{\delta_i;~ i \in \mathcal{X}\}$ that minimizes $H(\mu,\nu)$ is $\mu^* := \delta_{i^*}$ where $i^* := \arg \max \nu_i$, i.e.
	\begin{align}\label{eq:fastcgraph}
	\min_{\mu \in \mathcal{P}} H(\mu,\nu) = H(\mu^*,\nu).
	\end{align}
\end{theorem}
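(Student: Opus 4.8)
The plan is to reduce the minimization over $\mathcal{P}$ to an elementary discrete optimization by invoking the explicit identity $H(\mu,\nu) = n \max_{j \in \llbracket 0,n \rrbracket}(\nu_j - \mu_j)$ already obtained in the first part of the theorem. First I would specialize this to a Dirac source $\mu = \delta_i$, so that $\mu_j = \1_{\{j = i\}}$ and $H(\delta_i,\nu) = n \max_{j \in \llbracket 0,n \rrbracket}(\nu_j - \1_{\{j=i\}})$. Separating the maximand into the case $j = i$, which contributes $\nu_i - 1$, and the cases $j \neq i$, which contribute $\max_{j \neq i}\nu_j$, exhibits the two competing candidate values.

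The key observation is that $\nu_i - 1 \le 0 \le \max_{j \neq i}\nu_j$, since $\nu$ is a probability measure and the state space has at least two points; hence the term $\nu_i - 1$ never realizes the maximum and $H(\delta_i,\nu) = n\,\max_{j \neq i}\nu_j$. Consequently, minimizing $H(\delta_i,\nu)$ over $i$ is the same as choosing the index $i$ that minimizes $\max_{j \neq i}\nu_j$, that is, deleting from the support the single state that is most responsible for the maximum of $\nu$.

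I would then identify this optimal index as $i^* = \arg\max_i \nu_i$. For any $i \neq i^*$ the restricted maximum $\max_{j \neq i}\nu_j$ still retains the global maximizer $\nu_{i^*}$ and therefore equals $\nu_{i^*} \ge \max_{j \neq i^*}\nu_j$; on the other hand, deleting $i^*$ itself leaves only the remaining entries, so $\max_{j \neq i^*}\nu_j$ is the largest of these and is no larger than $\nu_{i^*}$. Hence $\min_i \max_{j \neq i}\nu_j = \max_{j \neq i^*}\nu_j$, attained at $i = i^*$, which is exactly the assertion \eqref{eq:fastcgraph} with $\mu^* = \delta_{i^*}$.

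There is no genuine analytic obstacle here: the content is the discrete fact that removing the largest coordinate of a nonnegative vector minimizes the maximum of what remains. The only point deserving a word of care is the treatment of ties in $\arg\max_i\nu_i$: if several states attain the maximal value of $\nu$, then $\max_{j \neq i}\nu_j = \nu_{i^*}$ for every $i$, so all Dirac masses yield the same transport time and $\delta_{i^*}$ remains a (non-unique) minimizer; the stated identity then holds for any choice of $i^*$ among the maximizers.
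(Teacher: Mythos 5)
Your treatment of \eqref{eq:fastcgraph} is correct, and in fact somewhat more careful than the paper's own: you make explicit that for $\mu = \delta_i$ the term $j = i$ contributes $\nu_i - 1 \le 0$ and hence never realizes the maximum, so that $H(\delta_i,\nu) = n\max_{j \neq i}\nu_j$, and you address ties in $\arg\max_i \nu_i$; the paper's proof of this part is the same discrete observation written more tersely (it compares $n\max_j \nu_j$ for $i \neq i^*$ against $n\max_{j\neq i^*}\nu_j$ without spelling out why the $j=i$ term can be dropped).

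However, there is a genuine gap: you prove only the last assertion of the theorem. The identity $H(\mu,\nu) = n \max_{j \in \llbracket 0,n \rrbracket}(\nu_j - \mu_j)$ is not a prior result available to be invoked --- it is the first and main claim of the very statement under proof --- and the bounds \eqref{eq:cgraphbound} are likewise left unproven. The missing ingredient is the Lov\'{a}sz--Winkler representation (Theorem \ref{thm:Hmunu}), $H(\mu,\nu) = \max_{j} \sum_{i}(\mu_i - \nu_i)\E_i[\tau_j]$, combined with the complete-graph hitting times $\E_i[\tau_j] = n$ for $i \neq j$ (and $\E_j[\tau_j] = 0$): since $\sum_{i \neq j}(\mu_i - \nu_i) = (1-\mu_j) - (1-\nu_j) = \nu_j - \mu_j$, the representation collapses to $H(\mu,\nu) = n\max_j(\nu_j - \mu_j)$. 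The bounds \eqref{eq:cgraphbound} then follow because $(\nu_j - \mu_j)_+ \le \max_j(\nu_j - \mu_j) \le \max_j|\nu_j - \mu_j| \le ||\mu - \nu||_{TV} \le 1$, where the second-to-last inequality holds since each singleton $\{j\}$ is an admissible set in the definition of total variation. Without these steps your proposal establishes only the easiest portion of the theorem, conditional on its hardest portion.
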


We first note that the $O(n)$ bound of $H(\mu,\nu)$ is much tighter than the $O(n^3)$ bound in the general result Corollary \ref{cor:rwgraph}. In addition, \eqref{eq:fastcgraph} has an operational meaning when we are given the freedom to choose within $\mathcal{P}$ such that its optimal transport time $H(\mu,\nu)$ is the fastest: we should pick the state $i$ which maximizes $\nu_i$. This also makes sense intuitively as we initialize the chain at a high probability state of $\nu$, thus the transport time should be faster than other initializations.

\begin{proof}[Proof of Theorem \ref{thm:cgraph}]
	Thanks to Theorem \ref{thm:Hmunu}, we have
	\begin{align*}
	H(\mu,\nu) &= \max_{j \in \llbracket 0,n \rrbracket} \sum_{i \neq j} (\mu_i - \nu_i) \E_i[\tau_j] = n \max_{j \in \llbracket 0,n \rrbracket} (\nu_j - \mu_j).
	\end{align*}
	For the upper bound in \eqref{eq:cgraphbound}, we note that 
	$$H(\mu,\nu) \leq n \max_{j \in \llbracket 0,n \rrbracket} |\nu_j - \mu_j| \leq n ||\mu - \nu||_{TV} \leq n.$$
	As for the lower bound in \eqref{eq:cgraphbound}, we simple use
	$$n \max_{j \in \llbracket 0,n \rrbracket} (\nu_j - \mu_j) \geq n (\nu_j - \mu_j).$$
	Finally, we prove \eqref{eq:fastcgraph}. For $\mu \in \mathcal{P}$ and $\mu \neq \mu^*$, we see that
	$$H(\mu,\nu) = n \max_{j \in \llbracket 0,n \rrbracket} \nu_j \geq n \max_{j \in \llbracket 0,n \rrbracket, j \neq i^*} \nu_j = H(\mu^*,\nu).$$
\end{proof}

\subsection{... random walk on the $n$-star}\label{subsec:nstar}

In this section, we consider the $n$-star graph on $\mathcal{X} = \llbracket 0,n \rrbracket$, where there is an edge connecting $0 - 1, 0 - 2, \ldots, 0 - n$ and $0$ is the center of the star.

\begin{theorem}\label{thm:nstar}
	Suppose that $X$ is a random walk on the $n$-star. For the optimal transport time $H(\mu,\nu)$ from $\mu$ to $\nu$ using $X$, we have
	\begin{align}
	H(\mu,\nu) = \nu_0 - \mu_0 + 2n \left(\max_{j \in \llbracket 1,n \rrbracket} (\nu_j - \mu_j)\right)_+.
	\end{align}
	In particular, for $j \in \llbracket 1,n \rrbracket$,
	\begin{align}\label{eq:nstarbound}
	\nu_0 - \mu_0 + 2n (\nu_j - \mu_j)_+ \leq H(\mu,\nu) \leq (2n+1) ||\mu - \nu||_{TV} \leq 2n + 1 = O(n).
	\end{align}
\end{theorem}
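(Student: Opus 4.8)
The plan is to follow the same two-stage template used for Theorems \ref{thm:npath} and \ref{thm:cgraph}: first pin down the mean hitting times of the walk in closed form, then substitute them into the variational identity of Theorem \ref{thm:Hmunu} and simplify. On the $n$-star the center $0$ has degree $n$ and every leaf $j \in \llbracket 1,n \rrbracket$ has degree $1$, so from a leaf the walk moves to $0$ with probability one, while from $0$ it jumps to a uniformly chosen leaf. This immediately gives $\mathbb{E}_i[\tau_0] = 1$ for each leaf $i$. For a target leaf $j$ I would solve the one-step recursion $\mathbb{E}_0[\tau_j] = \frac{1}{n} + \frac{n-1}{n}\left(2 + \mathbb{E}_0[\tau_j]\right)$, obtained by conditioning on whether the first jump lands on $j$ or on another leaf (which costs two steps to return to $0$), yielding $\mathbb{E}_0[\tau_j] = 2n-1$; appending the single step from a leaf $i$ into the center then gives $\mathbb{E}_i[\tau_j] = 2n$ for distinct leaves $i,j$. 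I would record these three values in a short lemma.

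With the hitting times in hand, I would invoke Theorem \ref{thm:Hmunu} and split the maximization over the target into the center and the leaves. For $j=0$ the inner sum collapses via $\sum_i \mu_i = \sum_i \nu_i = 1$ to $\sum_{i \geq 1}(\mu_i - \nu_i) = \nu_0 - \mu_0$. For a fixed leaf $j$, in $\sum_i (\mu_i - \nu_i)\mathbb{E}_i[\tau_j]$ the $i=j$ term vanishes, the $i=0$ term contributes $(\mu_0-\nu_0)(2n-1)$, and the remaining leaves contribute $2n \sum_{i \neq 0,j}(\mu_i - \nu_i)$; rewriting $\sum_{i\neq 0,j}(\mu_i - \nu_i) = -(\mu_0 - \nu_0) - (\mu_j - \nu_j)$ through the total-mass identity collapses the whole expression to $(\nu_0 - \mu_0) + 2n(\nu_j - \mu_j)$. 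Taking the maximum over $j$ and factoring out the common summand $\nu_0 - \mu_0$ produces $H(\mu,\nu) = (\nu_0 - \mu_0) + 2n\big(\max_{j \in \llbracket 1,n \rrbracket}(\nu_j - \mu_j)\big)_+$, where the positive part encodes the comparison against the value $\nu_0 - \mu_0$ coming from the target $j=0$.

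For the bounds in \eqref{eq:nstarbound}, the lower bound is immediate from the monotonicity of $x \mapsto x_+$: since $\max_{k \geq 1}(\nu_k - \mu_k) \geq \nu_j - \mu_j$ for each fixed leaf $j$, discarding the maximum gives $H(\mu,\nu) \geq (\nu_0 - \mu_0) + 2n(\nu_j - \mu_j)_+$. For the upper bound I would first replace $\nu_0 - \mu_0$ by its positive part, $\nu_0 - \mu_0 \leq (\nu_0 - \mu_0)_+$, and then control both positive parts by the total variation distance by testing suitable singletons: using $||\mu - \nu||_{TV} \geq |\mu(A) - \nu(A)|$ with $A = \{0\}$ gives $(\nu_0 - \mu_0)_+ \leq |\nu_0 - \mu_0| \leq ||\mu - \nu||_{TV}$, and with $A = \{j^*\}$ at the maximizing leaf $j^*$ gives $\big(\max_{j \geq 1}(\nu_j - \mu_j)\big)_+ \leq ||\mu - \nu||_{TV}$. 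Adding these two estimates yields $H(\mu,\nu) \leq (2n+1)||\mu - \nu||_{TV}$, and $||\mu - \nu||_{TV} \leq 1$ closes the chain.

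I expect the only genuine care to lie in the hitting-time algebra and in the bookkeeping that produces the positive part in the closed form; the two inequalities are then routine. It is worth noting that a slightly sharper argument, keeping the $j=0$ and $j^*$ contributions together inside a single sum over the set $\{k : \nu_k > \mu_k\}$, would give the cleaner bound $2n\,||\mu - \nu||_{TV}$, so the stated $(2n+1)$ is exactly the mild loss incurred by bounding $\nu_0 - \mu_0$ by $(\nu_0 - \mu_0)_+$ separately.
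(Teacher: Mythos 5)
Your proposal is correct and follows essentially the same route as the paper: the same hitting-time values (which you derive via a one-step recursion where the paper simply cites Aldous--Fill), the same substitution into Theorem \ref{thm:Hmunu} collapsing the leaf-target sums to $\nu_0 - \mu_0 + 2n(\nu_j - \mu_j)$, and the same total-variation estimates for \eqref{eq:nstarbound}. Your closing remark that the upper bound can be sharpened to $2n\,||\mu - \nu||_{TV}$ (by testing the set $\{k:\nu_k > \mu_k\}$ rather than singletons separately) is a correct extra observation, but it does not alter the argument.
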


\begin{proof}
	As in previous sections, the mean hitting times are vital for us in determining $H(\mu,\nu)$. According to \cite[Chapter $5$ Example $10$]{AF14}, the mean hitting times of the random walk on $n$-star are, for $i \in \llbracket 1,n \rrbracket$,
	$$\mathbb{E}_i[\tau_j] = \begin{cases}
	1, \quad j = 0,\\
	2n, \quad j \neq i,0.
	\end{cases}$$
	As for $i = 0$ and $j \in \llbracket 1,n \rrbracket$, we have
	$$\mathbb{E}_0[\tau_j] = 2n-1.$$
	This leads to, for $j \in \llbracket 1,n \rrbracket$,
	\begin{align*}
	\sum_i (\mu_i - \nu_i) \E_i[\tau_0] &= \nu_0 - \mu_0, \\
	\sum_i (\mu_i - \nu_i) \E_i[\tau_j] &= (2n-1)(\mu_0 - \nu_0) + 2n(\nu_0  + \nu_j - \mu_0 - \mu_j) = \nu_0 - \mu_0 + 2n(\nu_j - \mu_j). 
	\end{align*}
	As a result, according to Theorem \ref{thm:Hmunu} we have
	$$H(\mu,\nu) = \nu_0 - \mu_0 + 2n \left(\max_{j \in \llbracket 1,n \rrbracket} (\nu_j - \mu_j)\right)_+,$$
	and \eqref{eq:nstarbound} follows from 
	$$(\nu_j - \mu_j)_+ \leq \max_{j \in \llbracket 0,n \rrbracket} (\nu_j - \mu_j) \leq ||\mu-\nu||_{TV}.$$
\end{proof}

\bibliographystyle{abbrvnat}
\bibliography{thesis}

\end{document}